\documentclass[secthm,seceqn,amsthm,ussrhead,12pt]{amsart}
\usepackage{amsmath,latexsym}
\usepackage[english]{babel}
\usepackage[psamsfonts]{amssymb}
\usepackage{times}
\usepackage{cite}

\usepackage[mathcal]{euscript}
\numberwithin{equation}{section} \textwidth=15.5cm
\topmargin=0.4cm \oddsidemargin=0.4cm \evensidemargin=0,4cm
\textheight=21.0cm

\newtheorem{theorem}{Theorem}[section]

\newtheorem{lemma}[theorem]{Lemma}

\theoremstyle{definition}
\newtheorem{remark}[theorem]{Remark}

\usepackage[colorlinks, bookmarks=true]{hyperref}
\usepackage{color,graphicx,shortvrb}

\usepackage{enumerate}
\usepackage{amsmath}
\usepackage{amssymb}
\usepackage[usenames,dvipsnames]{xcolor}

\usepackage[latin 1]{inputenc}


\def\11{\textbf{$1$}}

\begin{document}

\numberwithin{equation}{section}

\title[Local derivations on finite-dimensional Lie  algebras]{Local derivations on finite-dimensional Lie  algebras}

\author[Ayupov]{Shavkat Ayupov}
\email{sh$_{-}$ayupov@mail.ru}
\address{Dormon yoli str.,29, Institute of
 Mathematics,  National University of
Uzbekistan,  100125  Tashkent,   Uzbekistan}

\author[Kudaybergenov]{Karimbergen Kudaybergenov}
\email{karim2006@mail.ru}
\address{Ch. Abdirov str., 1, Department of Mathematics, Karakalpak State University, Nukus 230113, Uzbekistan}

\date{\today}
\maketitle

\maketitle \thispagestyle{empty}

\maketitle

\begin{abstract} We prove that every local
derivation on a finite-dimensional semisimple  Lie algebra over
an algebraically closed field of characteristic zero is a
derivation. We also give  examples of  finite-dimensional nilpotent Lie
algebras $\mathcal{L}$ with $\dim\mathcal{L}\geq 3$ which admit  local
derivations which are not  derivations.

{\it Keywords:} Semisimple Lie algebra, nilpotent Lie algebra, filiform Lie algebra,
derivation, local derivation.
\\

{\it AMS Subject Classification:} 16W25, 16W10, 17B20, 17B40.

\end{abstract}

\maketitle \thispagestyle{empty}

\section{Introduction}\label{sec:intro}

 In 1990, Kadison \cite{Kad90}
and Larson and Sourour \cite{LarSou} introduce the following concept of
local derivation: let $X$ be a Banach
$A$-bimodule over a Banach algebra $A$, a linear mapping
$\Delta:A\to X$ is said to be a \textit{local derivation} if for
every $x$ in $A$ there exists a derivation $D_x :A\to X$,
depending on $x$, satisfying $\Delta(x) = D_x (x).$

The main problems concerning this notion are to find conditions
under which local derivations become derivations and to present
examples of algebras with local derivations that are not
derivations \cite{Bresar, Kad90, LarSou}.
Kadison proves
in \cite[Theorem A]{Kad90} that each continuous local derivation
of a von Neumann algebra $M$ into a dual Banach $M$-bimodule is a
derivation. This theorem gave rise to studies and several results
 on local derivations on C$^*$-algebras, culminating with a
definitive contribution due to Johnson, which asserts that every
continuous local derivation of a C$^*$-algebra $A$ into a Banach
$A$-bimodule is a derivation \cite[Theorem 5.3]{John01}. Moreover in his
 paper, Johnson also gives an automatic continuity
result by proving that local derivations of a C$^*$-algebra $A$
into a Banach $A$-bimodule $X$ are continuous even if not assumed
a priori to be so (cf. \cite[Theorem 7.5]{John01}).\smallskip

Investigation of local derivations on (non necessarily Banach) algebras
of unbounded operators were initiated in papers \cite{AAKN11} and \cite{AKNA14}.

The  paper \cite{AAKN11} is devoted to the study of local derivations on
the algebra $S(M, \tau)$ of all $\tau$-measurable operators
affiliated with a von Neumann algebra $M$ and a faithful normal
semi-finite trace $\tau.$ One of  main results in the mentioned paper
presents an unbounded version of Kadison's Theorem A from \cite{Kad90} and
it asserts that every local derivation on $S(M, \tau)$ which is
continuous in the measure topology automatically becomes a
derivation. In particular in the case of the type I von Neumann
algebra $M$ all such local derivations on $S(M, \tau)$ are inner
derivations. Moreover for type I finite
von Neumann algebras without abelian direct summands as well as
for von Neumann algebras with the atomic lattice of projections,
the continuity condition on local derivations is redundant.
In \cite{AKNA14} it was proved that each local derivation on the so-called
non commutative Arens algebras affiliated with a von Neumann algebra $M$ and a faithful normal
semi-finite trace $\tau$ is automatically a derivation.

The paper \cite{AAKN11}  also deals with the problem of existence of local
derivations which are not derivations on algebras of measurable
operators. The consideration of such examples on various finite-
and infinite dimensional algebras was initiated by Kadison,
Kaplansky and Jensen (see \cite{Kad90}).  In \cite{AAKN11} this problem has been solved for a
class of commutative regular algebras, which include the algebras
of measurable functions on a measure space. Namely necessary and sufficient
conditions were obtained for the algebras of measurable and $\tau$-measurable
operators affiliated with a commutative von Neumann algebra to
admit local derivations that are not derivations.

In \cite{AK15} we initiated the study of derivation type maps on
 non associative algebras, namely, we investigated so-called  2-local derivations
on finite-dimensional Lie algebras, and showed an essential difference between
semisimple and nilpotent Lie algebras in the behavior of their 2-local derivations.

  The present paper is devoted to local derivation
on finite-dimensional Lie algebra over an algebraically closed field of characteristic zero.

After preliminaries  we prove in Section 3 the main result of the paper which asserts that
every local derivation on a finite dimensional semisimple Lie algebra over an algebraically closed field
of zero characteristic, is automatically a derivation. In Section 4 we give  examples of nilpotent Lie
algebra (so-call filiform Lie algebras) which admit  local derivations which are not  derivations.

\section{Preliminaries}
\label{lie}

All algebras and vector spaces considered in the paper are over an
algebraically closed field $\mathbb{F}$ with zero characteristic.

Let $\mathcal{L}$ be a Lie algebra. The \textit{center} of
$\mathcal{L}$ is denoted by $Z(\mathcal{L}):$
$$
Z(\mathcal{L})=\{x\in \mathcal{L}: [x,y]=0,\,\forall\,  y\in
\mathcal{L}\}.
$$

A Lie algebra $\mathcal{L}$ is called \textit{nilpotent} (respectively \textit{solvable})
if $
\mathcal{L}^k=\{0\}$  (respectively, if $\mathcal{L}^{(k)}=\{0\}$)  for some integer $k,$  where
$\mathcal{L}^0=\mathcal{L},$ $\mathcal{L}^k=[\mathcal{L}^{k-1,},\mathcal{L}],$ (respectively,  $\mathcal{L}^{(0)}=\mathcal{L},$
$\mathcal{L}^{(k)}=[\mathcal{L}^{(k-1)}, \mathcal{L}^{(k-1)}]), \,k\geq1.$ It is clear that nilpotent Lie algebras are solvable.

 Any Lie algebra $\mathcal{L}$ contains a unique maximal
solvable ideal, called the radical of $\mathcal{L}$ and denoted
$\mbox{Rad} \mathcal{L}.$ A non trivial Lie algebra $\mathcal{L}$
is called \textit{semisimple} if $\mbox{Rad} \mathcal{L}=0.$ This condition
is equivalent to requiring that $\mathcal{L}$ have no nonzero
abelian ideals.

A \textit{derivation} on a Lie algebra $\mathcal{L}$ is a linear
map $D:\mathcal{L}\rightarrow \mathcal{L}$ which satisfies the
Leibniz rule, that is
$$
D([x,y])=[D(x),y]+[x,D(y)]
$$
for all $x,y\in \mathcal{L}.$ The set of all derivations of a Lie
algebra $\mathcal{L}$ is a Lie algebra with respect to commutation
operation and it is denoted by $\mbox{Der} \mathcal{L}.$ For any
$a\in \mathcal{L},$ the map $\mbox{ad}(a):\mathcal{L}\rightarrow \mathcal{L}$
defined as $\mbox{ad}(a)(x) = [a,x], x\in \mathcal{L},$ is a derivation, and derivations
of this form are called \textit{inner derivation}. The set of all
inner derivations of $\mathcal{L}$ denoted $\mbox{ad} L$ is an
ideal in $\mbox{Der} \mathcal{L}.$ It is well known that any
derivation on a finite-dimensional semisimple Lie algebra is
inner.

Recall that a  linear map $\Delta:\mathcal{L}\rightarrow
\mathcal{L}$
 is called a
 \textit{local derivation} if  for every $x\in \mathcal{L},$  there exists
 a derivation $D_{x}:\mathcal{L}\rightarrow \mathcal{L}$ (depending on $x$)
such that $\Delta(x)=D_{x}(x).$

\section{Local derivations of finite-dimensional  semisimple Lie algebras}\label{semisimple}

The main result of this paper is the following theorem.

\begin{theorem}\label{localsemisimple}
Let $\mathcal{L}$ be a finite-dimensional  semisimple Lie algebra. Then
any local derivation $\Delta$ on $\mathcal{L}$ is a derivation.
\end{theorem}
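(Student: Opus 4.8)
The plan is to exploit the fact, recalled just above, that every derivation of a finite-dimensional semisimple Lie algebra is inner. This turns the hypothesis into a concrete containment: since for each $x$ the witnessing derivation $D_x$ has the form $\mathrm{ad}(a_x)$, the local derivation condition becomes
\[
\Delta(x)\in[\mathcal{L},x]=\mathrm{im}(\mathrm{ad}(x))\qquad\text{for every }x\in\mathcal{L},
\]
and the goal is to produce a single $a\in\mathcal{L}$ with $\Delta=\mathrm{ad}(a)$. First I would reduce to the case where $\mathcal{L}$ is simple: writing $\mathcal{L}=\mathcal{S}_1\oplus\cdots\oplus\mathcal{S}_k$ as a sum of simple ideals, every derivation preserves each $\mathcal{S}_i$, and since $[\mathcal{L},x]=[\mathcal{S}_i,x]\subseteq\mathcal{S}_i$ for $x\in\mathcal{S}_i$, the displayed condition forces $\Delta(\mathcal{S}_i)\subseteq\mathcal{S}_i$ with $\Delta|_{\mathcal{S}_i}$ again a local derivation. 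Thus it suffices to treat each simple summand separately.

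Next, fix a Cartan subalgebra $\mathfrak{h}$ and the root space decomposition $\mathcal{L}=\mathfrak{h}\oplus\bigoplus_{\alpha\in\Phi}\mathcal{L}_\alpha$. The key tool is that, because $\mathrm{ad}(x)$ is skew-symmetric for the nondegenerate Killing form $\kappa$, one has $\mathrm{im}(\mathrm{ad}(x))=Z_{\mathcal{L}}(x)^{\perp_\kappa}$, where $Z_{\mathcal{L}}(x)$ is the centralizer. I would first determine $\Delta$ on $\mathfrak{h}$. For $h\in\mathfrak{h}$ the condition $\Delta(h)\in\mathrm{im}(\mathrm{ad}(h))=\bigoplus_{\alpha(h)\neq0}\mathcal{L}_\alpha$ says that the $\mathfrak{h}$-component of $\Delta(h)$ vanishes and that the $\mathcal{L}_\alpha$-component vanishes whenever $\alpha(h)=0$. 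Writing $\Delta(h)=\sum_\alpha c_\alpha(h)\,e_\alpha$ with $c_\alpha$ linear, the latter says $c_\alpha$ vanishes on $\ker(\alpha|_{\mathfrak{h}})$, whence $c_\alpha=\lambda_\alpha\,\alpha$ for a scalar $\lambda_\alpha$. This is exactly the data needed to solve $\mathrm{ad}(a_0)|_{\mathfrak{h}}=\Delta|_{\mathfrak{h}}$ for a suitable $a_0\in\bigoplus_\alpha\mathcal{L}_\alpha$; replacing $\Delta$ by $\Delta-\mathrm{ad}(a_0)$, which is still a local derivation, I may assume $\Delta|_{\mathfrak{h}}=0$.

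It then remains to show that such a $\Delta$ equals $\mathrm{ad}(h')$ for some $h'\in\mathfrak{h}$ (note that $\mathrm{ad}(\mathfrak{h})$ already annihilates $\mathfrak{h}$, so the normalization cannot force $\Delta=0$). I would first show that each root space is preserved: testing the condition at $x=h+e_\gamma$ and letting $h$ range over $\mathfrak{h}$, the relation $\kappa(\Delta(e_\gamma),z)=0$ for $z\in Z_{\mathcal{L}}(h+e_\gamma)$ — applied with $z=h+e_\gamma$ and with further centralizing elements — should force $\Delta(e_\gamma)\in\mathcal{L}_\gamma$, say $\Delta(e_\gamma)=\mu_\gamma e_\gamma$. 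Finally I would prove that $\gamma\mapsto\mu_\gamma$ is the restriction to $\Phi$ of a linear functional on $\mathfrak{h}$, i.e. $\mu_{\alpha+\beta}=\mu_\alpha+\mu_\beta$ whenever $\alpha,\beta,\alpha+\beta\in\Phi$; choosing $h'\in\mathfrak{h}$ with $\alpha_i(h')=\mu_{\alpha_i}$ on the simple roots then yields $\Delta=\mathrm{ad}(h')$, and unwinding the reductions gives $\Delta=\mathrm{ad}(a_0+h')$ on each simple summand.

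The main obstacle is this last additivity step together with the root-space preservation. Because $\Delta$ is only assumed to be a local derivation, I cannot invoke the Leibniz rule to relate $\Delta(e_{\alpha+\beta})$ to $\Delta(e_\alpha)$ and $\Delta(e_\beta)$; instead the additivity must be coaxed out of the membership condition $\Delta(x)\in\mathrm{im}(\mathrm{ad}(x))$ evaluated at carefully chosen combinations such as $e_\alpha+e_\beta$ and $h+e_\alpha+e_\beta$, using the precise structure of the relevant centralizers (equivalently, Killing-orthogonality to them). Handling all root strings uniformly, and in particular the low-rank coincidences, is where the real bookkeeping lies; by comparison, the Cartan-subalgebra step and the reduction to the simple case are routine.
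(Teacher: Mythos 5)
Your opening moves are correct, and the Cartan-subalgebra step is in fact cleaner than the paper's own argument: from $\Delta(h)\in\mathrm{im}(\mathrm{ad}(h))=\bigoplus_{\alpha(h)\neq 0}\mathcal{L}_\alpha$ you rightly conclude that each component functional $c_\alpha$ vanishes on $\ker\alpha$, hence $c_\alpha=\lambda_\alpha\alpha$, which immediately yields a single element $a_0$ with $\Delta|_{\mathcal{H}}=\mathrm{ad}(a_0)|_{\mathcal{H}}$. The paper reaches the same normalization more laboriously, via Chevalley-basis integrality (Lemma~\ref{jordanzero}) and an auxiliary element $h_0=\sum_k t^kh_k$ with $t$ algebraic of degree exceeding $\dim\mathcal{H}$ (Lemma~\ref{zeroo}). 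The reduction to simple ideals is also sound (the paper instead works with the root decomposition of the semisimple algebra directly, which comes to the same thing).

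The genuine gap is that the two steps carrying the entire weight of the theorem --- that $\Delta(e_\gamma)\in\mathcal{L}_\gamma$ once $\Delta|_{\mathcal{H}}=0$, and the additivity $\mu_{\alpha+\beta}=\mu_\alpha+\mu_\beta$ --- are only announced (``should force'', ``must be coaxed out''), never proved. Note that additivity, together with $\mu_{-\alpha}=-\mu_\alpha$, \emph{is} the Leibniz identity on pairs of root vectors, i.e.\ it is the theorem itself in disguise; deferring it as ``bookkeeping'' concedes the whole problem. These are exactly the points where the paper does all of its work, and its route shows what is missing from yours: to kill the unwanted $e_{\gamma+\alpha}$-components of $\Delta(e_\gamma)$ it runs a case analysis over $\alpha$-strings through $\beta$, testing the local derivation condition at elements $h+e_\alpha$ with $h$ adapted to the string (Lemma~\ref{restriction}); it then needs an external input with no analogue in your proposal, namely the Bre\'sar--\'Semrl theorem \cite{Bresar} on local derivations of associative matrix algebras, which settles $\mathfrak{sl}_{n+1}$ (Lemma~\ref{typea}); and the Leibniz rule on a pair $e_\alpha,e_\beta$ is finally verified by observing that $\mathrm{alg}\{e_{\pm\alpha},e_{\pm\beta}\}$ is one of the four rank-two algebras $\mathfrak{sl}_2\oplus\mathfrak{sl}_2$, $\mathfrak{sl}_3$, $\mathfrak{so}_5(\mathbb{F})$, $\mathfrak{g}_2$, the last two requiring separate explicit computations (Lemmas~\ref{sofive} and~\ref{gtwo}) --- precisely the ``low-rank coincidences'' you flag but do not treat. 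As it stands, your text is a correct first third of a proof plus a plan for the remaining two thirds, with no evidence that centralizer/Killing-form conditions alone suffice to execute that plan.
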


 A \textit{Cartan subalgebra} $\mathcal{H}$ of   a semisimple Lie algebra $\mathcal{L}$ is a nilpotent
subalgebra which coincides with its centralizer: $C(\mathcal{H}) =
\{x\in \mathcal{L}: [x, h] = 0,\, \forall h\in \mathcal{H}\} =
\mathcal{H}.$

A Cartan subalgebra $\mathcal{H}$ of a  finite-dimensional
semisimple Lie algebra $\mathcal{L}$ is \textit{abelian}, i.e.
$[x,y]=0$ for all $x,y\in \mathcal{H}.$

From now on in this section, we fix a  semisimple Lie
algebra $\mathcal{L}$ and a Cartan subalgebra $\mathcal{H}\subset
\mathcal{L}.$

We will essentially use the following decomposition of
finite-dimensional  semisimple Lie algebras (see for details
\cite{Jacob}, \cite{Humphreys}).

There exists a  decomposition for $\mathcal{L},$ called the
\textit{root decomposition}
$$
\mathcal{L}=\mathcal{H}\oplus \bigoplus\limits_{\alpha\in
R}\mathcal{L}_\alpha,
$$
where
$$
\mathcal{L}_\alpha=\{x\in \mathcal{L}:  [h, x] = \alpha(h)x,\,
\forall\,  h \in \mathcal{H}\},
$$
$$
R = \{\alpha \in \mathcal{H}^\ast\setminus \{0\}:
\mathcal{L}_\alpha\neq \{0\}\}
$$
and $\mathcal{H}^\ast$ is the space of all linear functionals on
$\mathcal{H}.$ The set $R$ is called the \textit{root system} of
$\mathcal{L},$ and subspaces $\mathcal{L}_\alpha$ are called the
\textit{root subspaces}.

The above decomposition has the following important properties: If
for each $\alpha\in R$ we take a non zero element $e_\alpha\in
\mathcal{L}_\alpha,$ then

(a) $[e_\alpha, e_\beta]=n_{\alpha,\beta}e_{\alpha+\beta},$ if
$\alpha+\beta\neq 0$ is a root, where $0\neq n_{\alpha,\beta}\in
\mathbb{F}$;

(b) $[e_\alpha, e_\beta]=0,$ if $\alpha+\beta \neq 0$ is not a
root;

(c) $[e_\alpha, e_{-\alpha}]=h_\alpha\in \mathcal{H};$

(d) $\mathcal{L}_\alpha=\mathbb{F}e_\alpha$ for all $\alpha\in R.$

From the definition of the root subspaces it follows that
\begin{center}
$[h, e_\alpha]=\alpha(h)e_\alpha$ for all $h\in \mathcal{H},$
$\alpha\in R.$
\end{center}

There exists a basis  $B = \{\alpha_1, \ldots , \alpha_l\}$ of
$\mathcal{H}^\ast$ such that any root $\alpha\in R$ is a linear
combination of the $\{\alpha_i\}_{1\leq i\leq l}$ with integer
coefficients (see \cite{Humphreys}).

From now on we fixed co-called \textit{Chevalley basis}
$\left\{h_{i}=h_{\alpha_i}: i\in \overline{1,l}\right\}
\cup\left\{e_\alpha: \alpha\in R\right\}$ in $\mathcal{L}$ with
the property that all structure constants are integers, in
particular, $\alpha(h_i)$ is integer for all $\alpha\in R,$ $i\in
\overline{1,l}$ and $n_{\alpha,\beta}$ is also  integer for all
$\alpha, \beta\in R$ with $\alpha+\beta\in R.$

Recall that a Lie algebra is \textit{simple} if  it has no non-trivial ideals and is not abelian.
Any semisimple Lie algebra is the direct sum of its minimal ideals, which are canonically determined simple Lie algebras.

 The following algebras are simple finite-dimensional Lie algebras:

\begin{itemize}
\item  $A_n:$ $\mathfrak {sl}_{n+1}(\mathbb{F}),$ the special linear Lie algebra;
\item $B_n:$ $\mathfrak{so}_{2n+1}(\mathbb{F}),$ the odd-dimensional special
orthogonal Lie algebra;
\item $C_n:$ $\mathfrak {sp}_{2n}(\mathbb{F}),$ the
symplectic Lie algebra;
\item $D_n:$ $\mathfrak{so}_{2n}(\mathbb{F}),$ the
even-dimensional special orthogonal Lie algebra.
\end{itemize}
These Lie algebras are numbered so that $n$ is the rank, i.e. the
dimension of Cartan subalgebra.  These four families, together
with five exceptions ($\mathfrak{e}_6,$ $\mathfrak{e}_7,$
$\mathfrak{e}_8,$ $\mathfrak{f}_4$ and $\mathfrak{g}_2$), are in
fact the only simple Lie algebras over  an algebraically closed
field of characteristic zero (see \cite{Humphreys}).

The proof  of Theorem~\ref{localsemisimple} consists of three
steps. In the first  step we will  show that any local derivation
$\Delta$ on semisimple Lie algebra can be represent in the form
$$
\Delta=T+\textrm{ad}(a),
$$
where $T$ is a local derivation such that $T|_{\mathcal{H}}\equiv
0$ and $a\in \mathcal{L}.$

Let us rewrite a root decomposition of  $\mathcal{L}$ as
$$
\mathcal{L}=\mathcal{L}_1\oplus \mathcal{L}_2,
$$
where $\mathcal{L}_1=H,$ $\mathcal{L}_2=\textrm{span}\{e_\alpha :
\alpha\in R\}.$  Then any local derivation on $\mathcal{L}$ can be
represent as  $2\times 2$-matrix of the following form:
$$
\left(%
\begin{array}{cc}
  A_{11} & A_{12} \\
  A_{21} & A_{22} \\
\end{array}%
\right),
$$
where $A_{ij}$ maps $\mathcal{L}_j$ into $\mathcal{L}_i$ for all
$1\leq i,j\leq 2.$

Let $\{h_1, \cdots, h_l\}$ be a basis of $\mathcal{H}.$
For
$$
x=\sum\limits_{i=1}^l \lambda_ih_i+\sum\limits_{\alpha\in R}
\lambda_\alpha e_\alpha
$$
we denote
\begin{center}
$x_i=\lambda_i$ and $x_\alpha=\lambda_\alpha$
\end{center}
for all $i\in \overline{1, l},$ $\alpha\in R.$

For any $h_k$ $(1\leq k \leq l)$ take an element
$a=h_a+\sum\limits_{\gamma\in R}a_\gamma^{(k)}e_{\gamma}$
(depending on $h_k$) such that $\Delta(h_k)=[a, h_k].$ Since
$$
\Delta(h_k)=[a, h_k]=\left[h_a+\sum\limits_{\gamma\in
R}a_\gamma^{(k)}e_{\gamma}, h_k\right]=\sum\limits_{\gamma\in R}
a_\gamma^{(k)} \gamma(h_k)e_{\gamma},
$$
it follows that $\Delta(h_k)_i=0$ for all $i\in \overline{1,l}.$
This means that  $A_{11}=0.$ We also see that
$A_{21}=\left(a_{\gamma,k}\right),$ where
$a_{\gamma,k}=a_{\gamma}^{(k)}\gamma(h_k)$  for all $\gamma \in R,
k\in \overline{1, l}.$ So, the matrix of $\Delta$ has the
following form
\begin{equation}\label{matrix}
\left(%
\begin{array}{cc}
  0      & A_{12} \\
  A_{21} & A_{22} \\
\end{array}%
\right).
\end{equation}

\begin{lemma}\label{jordanzero}
For every $\gamma\in R$ there exist  $d_\gamma\in \mathbb{F}$ and
integers  $r_{\gamma, i},$ $i=1,\cdots, l$ such that $a_{\gamma,
i}=r_{\gamma, i} d_\gamma$ for all $i.$
\end{lemma}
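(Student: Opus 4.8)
The plan is to exploit the fact that the local derivation property holds not only on the chosen basis $\{h_1,\dots,h_l\}$ but on every element of $\mathcal{H}$, and then to combine this with the linearity of $\Delta$. First I would fix a root $\gamma\in R$ and consider, for an arbitrary $h=\sum_{i=1}^l\lambda_i h_i\in\mathcal{H}$, a derivation $D_h$ with $\Delta(h)=D_h(h)$. Since every derivation of a finite-dimensional semisimple Lie algebra is inner, $D_h=\mathrm{ad}(b)$ for some $b=h_b+\sum_{\beta\in R}b_\beta e_\beta\in\mathcal{L}$, and because $\mathcal{H}$ is abelian a direct computation gives
\[
\Delta(h)=[b,h]=\sum_{\beta\in R}b_\beta\,\beta(h)\,e_\beta .
\]
In particular the coefficient of $e_\gamma$ in $\Delta(h)$ is $b_\gamma\,\gamma(h)$, that is, a scalar multiple of $\gamma(h)$ with the scalar depending on $h$.

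Next, since each root subspace $\mathcal{L}_\gamma=\mathbb{F}e_\gamma$ is one-dimensional, extracting the $e_\gamma$-coefficient is a well-defined linear functional, so I would define $L_\gamma\colon\mathcal{H}\to\mathbb{F}$ by letting $L_\gamma(h)$ be the $e_\gamma$-coefficient of $\Delta(h)$. By linearity of $\Delta$ and the form \eqref{matrix} of its matrix, $L_\gamma\!\left(\sum_i\lambda_i h_i\right)=\sum_{i=1}^l\lambda_i\,a_{\gamma,i}$. The key observation is then immediate: whenever $\gamma(h)=0$, the computation of the previous paragraph forces the $e_\gamma$-coefficient of $\Delta(h)$ to vanish, so $L_\gamma(h)=0$. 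Thus $L_\gamma$ vanishes on the hyperplane $\ker\gamma\subset\mathcal{H}$ (recall $\gamma\neq0$).

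A linear functional on $\mathcal{H}$ that vanishes on $\ker\gamma$ must be a scalar multiple of $\gamma$, so there exists $d_\gamma\in\mathbb{F}$ with $L_\gamma=d_\gamma\,\gamma$. Evaluating at $h_i$ gives $a_{\gamma,i}=d_\gamma\,\gamma(h_i)$ for every $i$. Finally I would invoke the defining property of the Chevalley basis, namely that $\gamma(h_i)$ is an integer for all $\gamma\in R$ and all $i\in\overline{1,l}$; setting $r_{\gamma,i}=\gamma(h_i)\in\mathbb{Z}$ yields $a_{\gamma,i}=r_{\gamma,i}d_\gamma$, which is exactly the assertion.

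The only genuinely delicate point is the passage from the $h$-dependent proportionality (the $e_\gamma$-coefficient of $\Delta(h)$ is a multiple of $\gamma(h)$, with the multiple varying with $h$) to a single fixed linear relation among the $a_{\gamma,i}$. This is resolved precisely by observing that it suffices to control $L_\gamma$ on $\ker\gamma$, where the varying constant disappears, and then using that a functional vanishing on a hyperplane is determined up to a scalar by $\gamma$; the integrality of the $r_{\gamma,i}$ is then a free consequence of the Chevalley normalization rather than an extra argument.
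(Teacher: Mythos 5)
Your proof is correct and is essentially the same argument as the paper's: the paper also exploits that the $e_\gamma$-coefficient of $\Delta(h)$ vanishes for $h\in\ker\gamma$ (using the explicit elements $h=\gamma_0(h_k)h_i-\gamma_0(h_i)h_k$, which span $\ker\gamma_0$) and then concludes $a_{\gamma,i}=d_\gamma\,\gamma(h_i)$ by linearity, with integrality coming from the Chevalley basis. Your phrasing via the functional $L_\gamma$ vanishing on the hyperplane $\ker\gamma$ is just a coordinate-free packaging of the same computation.
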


\begin{proof} Let $\gamma_0\in R$ be a fixed root. Since $\{h_1, \cdots, h_l\}$ is a basis of $\mathcal{H},$
there exists $k\in \overline{1, l}$ such that $\gamma_0(h_k)\neq
0.$ Set
$$
d_{\gamma_0}=\frac{a_{\gamma_0,k}}{\gamma_0(h_k)}.
$$
For $i\neq k$ put $h=\gamma_0(h_k)h_i-\gamma_0(h_i)h_k.$ Using
\eqref{matrix} we obtain that
\begin{eqnarray*}
\Delta(h)_{\gamma_0} & = & a_{\gamma_0,
i}\gamma_0(h_k)-a_{\gamma_0, k}\gamma_0(h_i)=a_{\gamma_0,
i}\gamma_0(h_k) - d_{\gamma_0} \gamma_0(h_k) \gamma_0(h_i).
\end{eqnarray*}
On the other hand, we can find an element $b$ (depending on $h_k$)
such that $\Delta(h_k)=[b, h_k].$ Then
\begin{eqnarray*}
\Delta(h)_{\gamma_0} & = & [b,
h]_{\gamma_0}=\left[h_b+\sum\limits_{\gamma\in R}b_\gamma
e_{\gamma},
\gamma_0(h_k)h_i-\gamma_0(h_i)h_k\right]_{\gamma_0}=\\
& = & -b_{\gamma_0} \gamma_0(h_k)\gamma_0(h_i)+b_{\gamma_0}
\gamma_0(h_i)\gamma_0(h_k)=0.
\end{eqnarray*}
Therefore
$$
a_{\gamma_0,i}=\gamma_0(h_i) d_{\gamma_0}
$$
for all $i=1,\cdots, l.$ Since we use Shevalley basis, it follows
that all $\gamma_0(h_i)$ are integers. The proof is complete.
\end{proof}

Set
\begin{equation}\label{transc}
h_0=\sum\limits_{k=1}^lt^k h_k,
\end{equation}
where $t$ is a fixed algebraic number  from $\mathbb{F}$ of a
degree bigger than $l=\dim\mathcal{H}.$

\begin{lemma}\label{zeroo}
Suppose that $\Delta(h_0)=0.$ Then $\Delta|_H\equiv 0.$
\end{lemma}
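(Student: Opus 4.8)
The plan is to compute $\Delta(h_0)$ explicitly, exploiting the fact that $A_{11}=0$ together with the description of the entries $a_{\gamma,k}$ provided by Lemma~\ref{jordanzero}, and then to extract the scalars $d_\gamma$ one root at a time using the arithmetic nature of the chosen element $t$.

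First I would write, for each basis vector $h_k$, its image $\Delta(h_k)=\sum_{\gamma\in R}a_{\gamma,k}e_\gamma$ (there is no $\mathcal{H}$-component, since $A_{11}=0$), and then by linearity obtain
$$
\Delta(h_0)=\sum_{k=1}^l t^k\Delta(h_k)=\sum_{\gamma\in R}\Big(\sum_{k=1}^l a_{\gamma,k}\,t^k\Big)e_\gamma.
$$
Substituting $a_{\gamma,k}=r_{\gamma,k}\,d_\gamma=\gamma(h_k)\,d_\gamma$ from Lemma~\ref{jordanzero}, the $\gamma$-coefficient of $\Delta(h_0)$ becomes $d_\gamma\,p_\gamma(t)$, where $p_\gamma(X)=\sum_{k=1}^l\gamma(h_k)X^k$ is a polynomial with integer coefficients of degree at most $l$.

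The crucial step is to show that $p_\gamma(t)\neq 0$ for every $\gamma\in R$. Since each $\gamma$ is a nonzero functional on $\mathcal{H}$ and $\{h_1,\ldots,h_l\}$ is a basis, the integers $\gamma(h_k)$ are not all zero, so $p_\gamma$ is a nonzero polynomial of degree at most $l$. Because $t$ was chosen algebraic of degree strictly larger than $l$, its minimal polynomial over $\mathbb{Q}$ has degree exceeding $l$ and therefore cannot divide $p_\gamma$; hence $t$ is not a root of $p_\gamma$, i.e. $p_\gamma(t)\neq 0$. I expect this to be the main (and essentially the only) obstacle: the rest is bookkeeping, whereas here one must invoke precisely the defining property of $t$ in order to separate the contributions of the different roots and prevent cancellation among the terms $t^k$.

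Finally, the hypothesis $\Delta(h_0)=0$ forces each coefficient $d_\gamma\,p_\gamma(t)$ to vanish, and since $p_\gamma(t)\neq 0$ we conclude $d_\gamma=0$ for all $\gamma\in R$. Consequently $a_{\gamma,k}=\gamma(h_k)\,d_\gamma=0$ for all $\gamma$ and all $k$, so $A_{21}=0$, and therefore $\Delta(h_k)=0$ for every $k$. Since the $h_k$ span $\mathcal{H}$, this yields $\Delta|_{\mathcal{H}}\equiv 0$, as required.
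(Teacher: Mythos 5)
Your proof is correct and follows essentially the same route as the paper: expand $\Delta(h_0)$ coefficientwise using $A_{11}=0$ and Lemma~\ref{jordanzero}, then use the fact that $t$ is algebraic of degree exceeding $l$ to force each $d_\gamma$ to vanish. In fact you are slightly more careful than the paper, which asserts $\sum_{k=1}^l r_{\gamma,k}t^k\neq 0$ without explicitly noting (as you do) that the integers $r_{\gamma,k}=\gamma(h_k)$ cannot all vanish because $\gamma$ is a nonzero functional on $\mathcal{H}$.
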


\begin{proof}
It is suffices to show that $A_{21}=0$ for the matrix of the local
derivation $\Delta.$

For any  $\gamma\in R$ we have
\begin{eqnarray*}
0 & = & \Delta(h_0)_\gamma = \sum\limits_{k=1}^l a_{\gamma, k} t^k
= \sum\limits_{k=1}^l r_{\gamma, k} d_\gamma t^k = d_\gamma
\sum\limits_{k=1}^l r_{\gamma, k} t^k.
\end{eqnarray*}
Since  $r_{\gamma, 1}, \cdots,  r_{\gamma, l}$ are integers and
the degree of the algebraic number  $t$ is bigger than $l,$ it
follows that $\sum\limits_{k=1}^l r_{\gamma, k} t^k\neq 0.$
Therefore  $d_\gamma=0,$ i.e. $a_{\gamma, k}=0$ for all $\gamma$
and $k.$ This means that $A_{21}=0.$ The proof is complete.
\end{proof}

Let us take an element $a$ such that $\Delta(h_0)=[a, h_0].$ Set
\begin{equation}\label{dec}
T=\Delta-\textrm{ad}(a).
\end{equation}
By Lemma~\ref{zeroo} we have that $T|_{\mathcal{H}}\equiv 0.$

Now we are in position to pass to the second   step of our proof.
In this step we show that if a local derivation $\Delta$
annihilates  a Cartan subalgebra $\mathcal{H}$, then it leaves  invariant
the root subspaces.

Let us   first  consider local derivations on a Lie algebra which
is a direct sum of algebras $\mathfrak{sl}_{n_i+1}(\mathbb{F}),$
where $n_1, \cdots, n_k\in \mathbb{N}.$

\begin{lemma}\label{typea}
Any local derivation $\Delta$ on $\mathfrak{sl}_{n+1}(\mathbb{F})$
is a derivation.
\end{lemma}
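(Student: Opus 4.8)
The plan is to pass, via the reduction already prepared, to a local derivation that annihilates the Cartan subalgebra, then to prove that such a map preserves every root space, and finally to show that the resulting eigenvalues are the values of a single functional, so that $\Delta$ becomes an inner derivation implemented by a diagonal element.

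First I realize $\mathfrak{sl}_{n+1}(\mathbb{F})$ in the usual way: $\mathcal{H}$ is the algebra of traceless diagonal matrices, the roots are $\varepsilon_i-\varepsilon_j$ $(i\neq j)$, and the root vectors are the matrix units $e_{ij}$. Given an arbitrary local derivation $\Delta$, I choose $a$ with $\Delta(h_0)=[a,h_0]$ for the element $h_0$ of \eqref{transc} and pass to $T=\Delta-\textrm{ad}(a)$ as in \eqref{dec}. Then $T$ is again a local derivation with $T(h_0)=0$, so Lemma~\ref{zeroo} gives $T|_{\mathcal{H}}\equiv 0$; since $\textrm{ad}(a)$ is a genuine derivation it is enough to treat $T$, and so I may assume from the start that $\Delta|_{\mathcal{H}}\equiv 0$. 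Because $\textrm{ad}(b)|_{\mathcal{H}}=0$ forces $b\in C(\mathcal{H})=\mathcal{H}$, the goal is now precisely to produce a diagonal $h^\ast=\textrm{diag}(d_1,\dots,d_{n+1})\in\mathcal{H}$ with $\Delta(e_{ij})=(d_i-d_j)e_{ij}$ for all $i\neq j$.

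The second step is to show that $\Delta$ leaves each root space invariant, i.e.\ $\Delta(e_{ij})=c_{ij}e_{ij}$. Writing $\Delta(e_{ij})=[a,e_{ij}]=ae_{ij}-e_{ij}a$ immediately confines $\Delta(e_{ij})$ to the $i$-th row and the $j$-th column. To cut this down to the single entry $(i,j)$ I use that $\Delta$ kills $\mathcal{H}$: evaluating the local derivation property at $e_{ij}+h$ for a diagonal $h$ with pairwise distinct entries forces the realizing element to be supported on row $i$, column $j$ and the diagonal, which already removes the diagonal part of $\Delta(e_{ij})$; the remaining off-diagonal entries in row $i$ and column $j$ must then be annihilated by a further comparison, evaluating $\Delta$ on perturbations of $e_{ij}$ by suitable additional root vectors and matching supports. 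This bookkeeping is the technical heart of the proof and, I expect, the main obstacle: no single instance of the local derivation identity suffices, and one must intersect the support constraints coming from several evaluations. For $n=1$ there is nothing to remove, which is why $\mathfrak{sl}_2$ is immediate, but for $n\ge 2$ the spurious row and column entries require genuine work.

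Granting root-space invariance, the third step determines the scalars $c_{ij}$, and here the key observation is that for a single realizing element $a$ with $[a,e_{ij}]=\Delta(e_{ij})$ the $(i,j)$-entry of $[a,e_{ij}]$ equals $a_{ii}-a_{jj}$, while the other root-vector brackets do not meet that position. Applying this to $e_{ij}+e_{ji}$ yields $c_{ij}=a_{ii}-a_{jj}$ and $c_{ji}=a_{jj}-a_{ii}$, hence $c_{ji}=-c_{ij}$; applying it to the three-cycle $e_{ij}+e_{jk}+e_{ki}$ (for distinct $i,j,k$) yields $c_{ij}=a_{ii}-a_{jj}$, $c_{jk}=a_{jj}-a_{kk}$, $c_{ki}=a_{kk}-a_{ii}$ for one common $a$, so that $c_{ij}+c_{jk}+c_{ki}=0$. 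Together with antisymmetry this is the cocycle relation $c_{ik}=c_{ij}+c_{jk}$, which is exactly the compatibility condition guaranteeing a solution $(d_1,\dots,d_{n+1})$ of $c_{ij}=d_i-d_j$, unique after normalizing by $\sum_i d_i=0$. Setting $h^\ast=\textrm{diag}(d_1,\dots,d_{n+1})\in\mathcal{H}$ then gives $\Delta=\textrm{ad}(h^\ast)$ on the whole Chevalley basis, so $\Delta$ is a derivation. This compatibility step is routine once invariance is in hand; it is the invariance itself that I expect to cost the most effort.
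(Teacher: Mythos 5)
Your overall route is genuinely different from the paper's, and its skeleton is sound: the reduction to $\Delta|_{\mathcal{H}}\equiv 0$ via \eqref{transc}, \eqref{dec} and Lemma~\ref{zeroo} is legitimate (those results precede Lemma~\ref{typea} and do not use it), and, \emph{granted} root-space invariance, your evaluations at $e_{ij}+e_{ji}$ and $e_{ij}+e_{jk}+e_{ki}$ correctly give $c_{ji}=-c_{ij}$ and $c_{ij}+c_{jk}+c_{ki}=0$, hence $\Delta=\mathrm{ad}(h^\ast)$ for a diagonal $h^\ast$. The genuine gap is exactly the step you defer: the invariance $\Delta(e_{ij})\in\mathbb{F}e_{ij}$ is never proved, and the tactics you sketch do not prove it. Writing $\Delta(e_{ij}+h)=[b,e_{ij}]+[b,h]$ with $h$ diagonal with \emph{pairwise distinct} entries does force $b_{pq}=0$ at every off-diagonal position $(p,q)$ outside row $i$ and column $j$ (in particular $b_{ji}=0$, which kills the diagonal part of $\Delta(e_{ij})$, as you claim), but for the entries that actually matter it yields only
$$
\Delta(e_{ij})_{kj}=b_{kj}(h_j-h_k),\qquad \Delta(e_{ij})_{il}=b_{il}(h_l-h_i),\qquad k,l\notin\{i,j\},
$$
where $b_{kj}$ and $b_{il}$ are unconstrained; since $h_j-h_k\neq 0$ and $h_l-h_i\neq 0$ for generic $h$, no vanishing follows, however many such $h$ you try. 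Your fallback --- perturbing $e_{ij}$ by \emph{additional root vectors} --- does not help either: evaluating at $e_{ij}+e_{kl}$ merges the supports of $[c,e_{ij}]$ and $[c,e_{kl}]$ and produces relations between entries of $\Delta(e_{ij})$ and entries of $\Delta(e_{kl})$, not the vanishing of either.

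What closes the gap is a \emph{non-generic} diagonal perturbation chosen per target entry: to kill $\Delta(e_{ij})_{kj}$, take $h$ with $h_k=h_j$ but $h_k\neq h_i$. The support constraint $\Delta(e_{ij})_{ki}=0$ (which comes from the evaluation at $e_{ij}$ alone), applied to the $(k,i)$ entry of $[b,e_{ij}]+[b,h]$, gives $b_{ki}(h_i-h_k)=0$, so $b_{ki}=0$; then the $(k,j)$ entry of the right-hand side equals $b_{ki}+b_{kj}(h_j-h_k)=0$, whence $\Delta(e_{ij})_{kj}=0$, and the column entries are handled symmetrically. This is precisely the $\alpha$-string argument of Lemma~\ref{restriction} (the choice $\beta(h)\neq 0$, $(\beta+\alpha)(h)=0$); but in the paper that lemma comes \emph{after} Lemma~\ref{typea} and its final assertion invokes it, so in your architecture you must write this argument out yourself rather than cite it. Note finally that the paper's own proof of Lemma~\ref{typea} bypasses all of this bookkeeping: it extends $\Delta$ to $\mathfrak{gl}_{n+1}(\mathbb{F})$ by $\Delta_0(x)=\Delta(x-\mathrm{tr}(x)\mathbf{1})$, observes that each realizing derivation is of the form $[a,\cdot\,]$, hence an \emph{associative} derivation of $M_{n+1}(\mathbb{F})$, so that $\Delta_0$ is a local derivation of the associative matrix algebra, and concludes by the Bre\'sar--\'Semrl theorem \cite[Theorem 2.3]{Bresar}; that route takes a few lines and needs no root-space analysis, whereas yours, once completed, is self-contained but essentially reproves the rank-one case of Lemma~\ref{restriction}.
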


\begin{proof}
Let $\Delta$ be a local derivation on
$\mathfrak{sl}_{n+1}(\mathbb{F}).$ Let us extend $\Delta$ on
$\mathfrak{gl}_{n+1}(\mathbb{F})$ by
$$
\Delta_0(x)=\Delta(x-\textrm{tr}(x)\mathbf{1}),\, x\in
\mathfrak{gl}_{n+1}(\mathbb{F}),
$$
where $\mathbf{1}$ is the identity matrix in
$\mathfrak{gl}_{n+1}(\mathbb{F})$ and $\textrm{tr}$ is the
 trace on $\mathfrak{gl}_{n+1}(\mathbb{F})$ with $\textrm{tr}(\mathbf{1})=1.$  Since
$$
\Delta_0(x)=\Delta(x-\textrm{tr}(x)\mathbf{1})=[a_{x-\textrm{tr}(x)\mathbf{1}},
x-\textrm{tr}(x)\mathbf{1}]=[a_{x-\textrm{tr}(x)\mathbf{1}}, x],
$$
it follows that $\Delta_0$ is a local associative derivation on
$M_{n+1}(\mathbb{F}).$ By \cite[Theorem 2.3]{Bresar} we have that
$\Delta_0$ is an associative derivation and therefore a Lie
derivation. Hence $\Delta$ is also a derivation. The proof is complete.
\end{proof}

Suppose  that $\mathcal{L}=\bigoplus_{i=1}^m \mathcal{L}_i$ is a
direct sum of  semisimple Lie algebras $\mathcal{L}_i.$ Since the
center of semisimple Lie algebra  $\mathcal{L}$ is trivial,
\cite[P. 9, Theorem 1]{Goze} implies  that
$$
\textrm{Der}(\mathcal{L})=\bigoplus_{i=1}^m
\textrm{Der}(\mathcal{L}_i).
$$
Thus any local derivation $\Delta$ on $\mathcal{L}$ can be
decomposed as
$$
\Delta=\Delta_1+\cdots +\Delta_m,
$$
where $\Delta_i$ is a local derivation on $\mathcal{L}_i$ for all
$i\in \overline{1,m}.$

This property together with Lemma~\ref{typea} imply the following
result.

\begin{lemma}\label{typeasum}
Any local derivation $\Delta$ on $\bigoplus\limits_{i=1}^m
\mathfrak{sl}_{n_i+1}(\mathbb{F})$ is a derivation.
\end{lemma}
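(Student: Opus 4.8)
The plan is to reduce the multi-summand statement to the single-summand case already settled in Lemma~\ref{typea}, using the derivation decomposition recorded in the paragraph immediately preceding the lemma. Write $\mathcal{L}=\bigoplus_{i=1}^m \mathfrak{sl}_{n_i+1}(\mathbb{F})$ and set $\mathcal{L}_i=\mathfrak{sl}_{n_i+1}(\mathbb{F})$ for the $i$-th simple summand. Each $\mathcal{L}_i$ is semisimple with trivial center, so the cited identity $\textrm{Der}(\mathcal{L})=\bigoplus_{i=1}^m \textrm{Der}(\mathcal{L}_i)$ applies; in particular every derivation of $\mathcal{L}$ preserves each summand and restricts there to a derivation.

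First I would invoke the splitting $\Delta=\Delta_1+\cdots+\Delta_m$ established before the statement, where each $\Delta_i$ is a local derivation on $\mathcal{L}_i$. Concretely, for $x\in\mathcal{L}_i$ the witnessing derivation $D_x$ decomposes as $\bigoplus_j (D_x)_j$ with $(D_x)_j\in\textrm{Der}(\mathcal{L}_j)$, whence $\Delta(x)=D_x(x)=(D_x)_i(x)\in\mathcal{L}_i$. Thus $\Delta$ leaves $\mathcal{L}_i$ invariant, and $\Delta_i:=\Delta|_{\mathcal{L}_i}$ is a local derivation on $\mathcal{L}_i$, with $(D_x)_i$ serving as its local witness at $x$.

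Next I would apply Lemma~\ref{typea} to each summand separately: since $\Delta_i$ is a local derivation on $\mathfrak{sl}_{n_i+1}(\mathbb{F})$, it is in fact a derivation of $\mathcal{L}_i$. Finally, reassembling, $\Delta=\bigoplus_{i=1}^m\Delta_i$ is a direct sum of derivations, one on each simple factor, and by the identity $\textrm{Der}(\mathcal{L})=\bigoplus_{i=1}^m\textrm{Der}(\mathcal{L}_i)$ such a direct sum is again a derivation of $\mathcal{L}$. This yields the claim.

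There is no genuine obstacle at this stage: the substantive work was carried out in Lemma~\ref{typea} (reducing the $\mathfrak{sl}_{n+1}$ case to the associative local-derivation theorem of \cite{Bresar} via the $\mathfrak{gl}_{n+1}$ extension) and in the structural fact that derivations of a direct sum of centerless summands split componentwise. The only point one must check is that componentwise derivations glue to a global derivation, and this is immediate from the Leibniz rule applied blockwise, using $[\mathcal{L}_i,\mathcal{L}_j]=0$ for $i\neq j$.
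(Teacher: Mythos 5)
Your proposal is correct and follows essentially the same route as the paper: the paper derives Lemma~\ref{typeasum} directly from the decomposition $\textrm{Der}(\mathcal{L})=\bigoplus_{i=1}^m\textrm{Der}(\mathcal{L}_i)$ (via \cite[P.~9, Theorem~1]{Goze}), the resulting splitting $\Delta=\Delta_1+\cdots+\Delta_m$ into local derivations on the summands, and an application of Lemma~\ref{typea} to each $\Delta_i$. Your write-up in fact supplies slightly more detail than the paper (the invariance of each summand under $\Delta$ and the blockwise Leibniz check when reassembling), but these are exactly the implicit steps of the paper's argument.
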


Let $\alpha, \beta\in R.$ There exist integers $p$ and $q$ such
that all
$$
\beta-p\alpha, \cdots, \beta, \cdots, \beta+q\alpha
$$
are roots, and this finite sequence is said to be
$\alpha$-\textit{string through} $\beta.$ The $\alpha$-string
through $\beta$ contains at most four roots (see \cite{Jacob}).

\begin{lemma}\label{restriction}
Let $\Delta$ be a local derivation such that $\Delta|_H\equiv 0.$
Then $\Delta$ maps $\textrm{span}\left\{h_\alpha, e_\alpha,
e_{-\alpha}\right\}$ into itself for all $\alpha\in R.$ Moreover
\begin{equation}\label{calpha}
\Delta(e_{\pm\alpha})=\pm c_\alpha e_{\pm\alpha}
\end{equation}
for some $c_\alpha\in \mathbb{F}.$
\end{lemma}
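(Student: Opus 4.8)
\emph{The plan is to reduce everything to the three-dimensional subalgebra $\mathfrak{s}_\alpha=\textrm{span}\{h_\alpha,e_\alpha,e_{-\alpha}\}\cong\mathfrak{sl}_2(\mathbb{F})$, where the already-proved Lemma~\ref{typea} does all the work.} First I would record the a priori shape of $\Delta(e_\alpha)$. Since $\mathcal{L}$ is semisimple every derivation is inner, so there is $a=h_a+\sum_{\gamma\in R}a_\gamma e_\gamma$ with $\Delta(e_\alpha)=[a,e_\alpha]$. Expanding with properties (a)--(d) and $[h,e_\alpha]=\alpha(h)e_\alpha$, the $\mathcal{H}$-part of $[a,e_\alpha]$ is $-a_{-\alpha}h_\alpha\in\mathbb{F}h_\alpha$, there is no $e_{-\alpha}$-term because $-2\alpha\notin R$, and the only root components occur at weights of the form $\alpha+\gamma$ with $\gamma\in R$. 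Hence $\Delta(e_\alpha)$ lies in $\mathbb{F}h_\alpha\oplus\mathbb{F}e_\alpha$ together with a sum of ``cross terms'' $e_{\alpha+\gamma}$, and the entire lemma reduces to the single assertion that these cross terms vanish, i.e. that $\Delta(e_{\pm\alpha})\in\mathfrak{s}_\alpha$.

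Granting that self-mapping, I would finish as follows. View $\mathcal{L}$ as a module over $\mathfrak{s}_\alpha\cong\mathfrak{sl}_2(\mathbb{F})$ via $\textrm{ad}$; by complete reducibility write $\mathcal{L}=\mathfrak{s}_\alpha\oplus\mathfrak{c}$ with $\mathfrak{c}$ an $\mathfrak{s}_\alpha$-submodule. For $x\in\mathfrak{s}_\alpha$ take the witnessing inner derivation $\textrm{ad}(a_x)$ and split $a_x=p_x+q_x$ with $p_x\in\mathfrak{s}_\alpha$, $q_x\in\mathfrak{c}$. Then $[q_x,x]\in\mathfrak{c}$ while $\Delta(x)\in\mathfrak{s}_\alpha$, so $[q_x,x]=0$ and $\Delta(x)=[p_x,x]$ with $p_x\in\mathfrak{s}_\alpha$. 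Thus $\Delta|_{\mathfrak{s}_\alpha}$ is a local derivation of $\mathfrak{s}_\alpha\cong\mathfrak{sl}_{1+1}(\mathbb{F})$, so by Lemma~\ref{typea} it is a derivation, hence inner: $\Delta|_{\mathfrak{s}_\alpha}=\textrm{ad}(w)$ for some $w\in\mathfrak{s}_\alpha$. Since $\Delta|_{\mathcal{H}}\equiv0$ gives $\Delta(h_\alpha)=[w,h_\alpha]=0$ and the centralizer of the regular element $h_\alpha$ in $\mathfrak{sl}_2$ is $\mathbb{F}h_\alpha$, we get $w=\lambda h_\alpha$; evaluating yields $\Delta(e_{\pm\alpha})=\pm\,\lambda\,\alpha(h_\alpha)\,e_{\pm\alpha}=\pm c_\alpha e_{\pm\alpha}$. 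Note this step produces the eigenvector form, the vanishing of any $h_\alpha$-component, and the sign relation all at once, with no extra computation.

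The hard part will be precisely the self-mapping $\Delta(e_\alpha)\in\mathfrak{s}_\alpha$, i.e. killing the cross terms $e_{\alpha+\gamma}$. It is useful to reformulate the local-derivation condition via the Killing form $\kappa$: since $\textrm{ad}(x)$ is $\kappa$-skew, $\Delta(x)\in[x,\mathcal{L}]=(\ker\textrm{ad}(x))^{\perp}$, so $\kappa(\Delta(x),y)=0$ whenever $[x,y]=0$. The obstacle is that evaluation at $x=e_\alpha$ alone cannot detect the cross terms: to read off the $e_{\alpha+\gamma}$-coefficient one must pair $\Delta(e_\alpha)$ with $e_{-\alpha-\gamma}$, yet $[e_\alpha,e_{-\alpha-\gamma}]\neq0$, so $e_{-\alpha-\gamma}\notin\ker\textrm{ad}(e_\alpha)$ and the relation is silent. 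My plan is therefore to combine several evaluation points — the regular element $h_0$ of \eqref{transc} shifted by root vectors (using, as in Lemma~\ref{zeroo}, that $\gamma(h_0)\neq0$ for all $\gamma\in R$ because $t$ is algebraic of degree exceeding $l$), together with the sums $e_\alpha+e_\gamma$ — and to exploit that each $\alpha$-string $\beta-p\alpha,\dots,\beta+q\alpha$ contains at most four roots. This bounds the number of coupled weights, so the orthogonality relations $\kappa(\Delta(x),y)=0$ for the finitely many centralizer elements $y$ collapse to an explicit finite linear system whose only solution forces every $v_{\alpha+\gamma}=0$. Carrying out this finite case analysis uniformly across the string lengths $1,2,3,4$ is the genuine technical heart of the argument; everything else is the clean reduction above.
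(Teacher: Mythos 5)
Your endgame is the same as the paper's and is sound: once one knows $\Delta(e_{\pm\alpha})\in\mathfrak{s}_\alpha:=\textrm{span}\left\{h_\alpha,e_\alpha,e_{-\alpha}\right\}$, restrict to $\mathfrak{s}_\alpha\cong\mathfrak{sl}_2(\mathbb{F})$, invoke Lemma~\ref{typea}, and use $\Delta(h_\alpha)=0$ to conclude $\Delta|_{\mathfrak{s}_\alpha}=\textrm{ad}(\lambda h_\alpha)$, which yields \eqref{calpha}. In fact your complete-reducibility argument, showing that the witnessing inner derivation for $x\in\mathfrak{s}_\alpha$ can be taken with its generator inside $\mathfrak{s}_\alpha$, is more careful than the paper, which asserts without comment that the restriction is a local derivation of the subalgebra. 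So the reduction is fine; the problem is what you reduced to.

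The genuine gap is that the step you yourself label ``the genuine technical heart'' --- the vanishing of the cross terms $\Delta(e_\alpha)_{\gamma+\alpha}$ --- is never proved; you only outline a plan, and the plan as stated would not work. The paper's proof spends essentially all of its effort here: it runs a case analysis over the $\alpha$-string through $\beta$ (lengths $2$, $3$, $4$) and, for each target weight, evaluates the local derivation at $h+e_\alpha$ where $h\in\mathcal{H}$ is chosen \emph{non-regularly} so as to annihilate the target weight (e.g. $\beta(h)=1$, $(\beta+\alpha)(h)=0$ in the two-root case); since $\Delta|_{\mathcal{H}}\equiv 0$ forces $\Delta(h+e_\alpha)=\Delta(e_\alpha)$, the witness $b$ at $h+e_\alpha$ first gives $b_\beta=0$ from the $\beta$-component and then $\Delta(e_\alpha)_{\beta+\alpha}=b_\beta n_{\beta,\alpha}=0$, cascading up the string. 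Your proposal instead uses the regular element $h_0$ of \eqref{transc} shifted by root vectors, and with that choice the system does not collapse: the witness equation at weight $\beta+\alpha$ reads
\begin{equation*}
\Delta(e_\alpha)_{\beta+\alpha}=-(\beta+\alpha)(h_0)\,b_{\beta+\alpha}+b_\beta n_{\beta,\alpha},
\end{equation*}
and since $(\beta+\alpha)(h_0)\neq 0$ by regularity, the unconstrained unknown $b_{\beta+\alpha}$ survives and no vanishing can be deduced --- precisely the opposite of what regularity buys you in Lemma~\ref{zeroo}. The trick is to kill the coefficient $(\beta+\alpha)(h)$, i.e.\ to use a carefully chosen singular $h$, and nothing in your sketch (neither the Killing-form reformulation, which you correctly note is silent at $x=e_\alpha$, nor the evaluation points $e_\alpha+e_\gamma$, which entangle two unknown values of $\Delta$) supplies this. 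As written, the core of the lemma remains unproven.
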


\begin{proof} Let $\alpha\in R.$

We are going to show that $\Delta(e_{\alpha})_\gamma=0$ for all
$\gamma\in R$ with $\gamma\neq\pm \alpha.$

Take an element $a=h_a+\sum\limits_{\gamma\in R}a_\gamma
e_{\gamma}$ (depending on $e_{\alpha}$) such that
$\Delta(e_{\alpha})=[a, e_{\alpha}].$ Then
\begin{eqnarray}\label{alphazero}
\Delta(e_{\alpha}) & = & \alpha(h_a)e_{\alpha}-a_{-\alpha}
h_\alpha +\sum\limits_{\gamma+\alpha\in R} a_{\gamma}
n_{\gamma,\alpha} e_{\gamma+\alpha}.
\end{eqnarray}

This equality implies that $\Delta(e_{\alpha})_{\gamma}=0$ if
$\gamma\neq\pm \alpha$ and $\gamma-\alpha$ is not a root, because
$\gamma$ can not be represented  as a sum $\gamma=\gamma'+\alpha.$

Now let  $\beta$ be a root such that $\beta+\alpha$ is also a
root.

It is suffices to consider the following three possible  cases.

Case 1. The $\alpha$-string through $\beta$ contains $2$ roots.
Without loss of generality we can assume that $\beta,
\beta+\alpha\in R$ and  $\beta-\alpha\notin R.$ Then the
equality~\eqref{alphazero} implies that
\begin{equation}\label{gammaal}
\Delta(e_{\alpha})_{\beta}=0.
\end{equation}
Since $\alpha\neq 0,$ there exists an element $h\in H$ such that
\begin{center}
$\beta(h)=1$ and $(\beta+\alpha)(h)=0.$
\end{center}
Take an element $b\in \mathcal{L}$ such that
$$
\Delta(h+e_{\alpha})=[b, h+e_{\alpha}].
$$
Taking into account that $\beta(h)=1$ and $(\beta+\alpha)(h)=0,$
we obtain
\begin{equation}\label{gammazeroa}
\Delta(h+e_{\alpha})_{\beta}= -b_{\beta},
\end{equation}
\begin{equation}\label{gammazero}
\Delta(h+e_{\alpha})_{\beta+\alpha}= b_{\beta}n_{\beta,\alpha}.
\end{equation}
Recall that  $\Delta(h+e_{\alpha})=\Delta(e_{\alpha}).$ Then
\eqref{gammaal} combined with \eqref{gammazeroa} gives us
$b_{\beta}=0.$ Thus from \eqref{gammazero} it follows that
$$
\Delta(h+e_{\alpha})_{\beta+\alpha}= 0.
$$
Again the equality  $\Delta(h+e_{\alpha})=\Delta(e_{\alpha}),$
implies that
$$
\Delta(e_{\alpha})_{\beta+\alpha}= 0.
$$

Case 2. The $\alpha$-string through $\beta$ contains $3$ roots
$\beta, \beta+\alpha, \beta+2\alpha\in R.$ As in the previous case
we obtain that
\begin{equation}\label{gam}
\Delta(e_{\alpha})_{\beta}=\Delta(e_{\alpha})_{\beta+\alpha}=0.
\end{equation}
Now take an element $h\in H$ such that
\begin{center}
$(\beta+\alpha)(h)=1$ and $(\beta+2\alpha)(h)=0.$
\end{center}
Let $c$ be an element such that
$$
\Delta(h+e_{\alpha})=[c, h+e_{\alpha}].
$$
Using the definition of the element $h$ from the last equality we
obtain
\begin{equation}\label{ga}
\Delta(h+e_{\alpha})_{\beta}= -2c_{\beta},
\end{equation}
\begin{equation}\label{gamz}
\Delta(h+e_{\alpha})_{\beta+\alpha}= c_{\beta+\alpha}+c_{\beta}
n_{\beta, \alpha},
\end{equation}
\begin{equation}\label{gamze}
\Delta(h+e_{\alpha})_{\beta+2\alpha}=
c_{\beta+\alpha}n_{\beta+\alpha,\alpha}.
\end{equation}
Combining  \eqref{gam}, \eqref{ga}, \eqref{gamz} and
\eqref{gamze}, we get $c_{\beta+\alpha}=0$ and $
\Delta(h+e_{\alpha})_{\beta+2\alpha}= 0.$ Since
$\Delta(h+e_{\alpha})=\Delta(e_{\alpha}),$ it follows that
$
\Delta(e_{\alpha})_{\beta+2\alpha}= 0.
$

Case 3. $\beta, \beta+\alpha, \beta+2\alpha, \beta+3\alpha\in R.$
The proof is similar to the previous cases.

So, $\Delta(e_{\pm\alpha})_\gamma=0$ for all $\gamma\in R$ with
$\gamma\neq\pm \alpha.$ This means that $\Delta(e_{\pm\alpha})\in
\textrm{span}\left\{h_{\alpha}, e_{\alpha}, e_{-\alpha}\right\}.$
Thus $\Delta|_{\textrm{span}\left\{h_{\alpha}, e_{\alpha},
e_{-\alpha}\right\}}$ is a local derivation on
$\textrm{span}\left\{h_{\alpha}, e_{\alpha},
e_{-\alpha}\right\}\cong \mathfrak{sl}_2(\mathbb{F}),$ and
therefore Lemma~~\ref{typea} implies that
$\Delta|_{\textrm{span}\left\{h_{\alpha}, e_{\alpha},
e_{-\alpha}\right\}}$ is a  derivation. Since
$\Delta(h_{\alpha})=0,$ there exists $c\in \mathbb{F}$ such that
$\Delta|_{\textrm{span}\left\{h_{\alpha}, e_{\alpha},
e_{-\alpha}\right\}}=\textrm{ad}(ch_\alpha).$ Therefore
$$
\Delta(e_{\pm\alpha})=[ch_{\alpha}, e_{\pm\alpha}]=\pm
c\alpha(h_\alpha)e_{\pm\alpha}.
$$
The proof is complete.
\end{proof}

In the third  step  we consider local derivations on the algebras
$\mathfrak{so}_5(\mathbb{F})$ and~$\mathfrak{g}_2.$

Let $a$ be an element from \eqref{dec}. In the proofs of the following
two Lemmas replacing, if necessary, the local derivation $\Delta$  by
$\Delta-\textrm{ad}(a),$ we may assume that
$\Delta|_{\mathcal{H}}\equiv 0.$

\subsection{Local derivations on
$\mathfrak{so}_5(\mathbb{F})$}

\begin{lemma}\label{sofive}
Any local derivation on $\mathfrak{so}_5(\mathbb{F})$ is a
derivation.
\end{lemma}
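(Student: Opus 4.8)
The plan is to exploit the two reductions already in place. By the reduction noted just before this subsection (see \eqref{dec}) we may assume $\Delta|_{\mathcal{H}}\equiv 0$, and then Lemma~\ref{restriction} shows that $\Delta$ is diagonal in the Chevalley basis: $\Delta(e_{\pm\alpha})=\pm c_\alpha e_{\pm\alpha}$ for scalars $c_\alpha\in\mathbb{F}$ with $c_{-\alpha}=-c_\alpha$, as in \eqref{calpha}. Since every derivation of a finite-dimensional semisimple Lie algebra is inner, proving that $\Delta$ is a derivation amounts to producing $a\in\mathcal{L}$ with $\Delta=\textrm{ad}(a)$. As $\Delta|_{\mathcal{H}}\equiv 0$, such an $a$ must centralize $\mathcal{H}$, hence $a\in C(\mathcal{H})=\mathcal{H}$, and for $h\in\mathcal{H}$ one has $\textrm{ad}(h)e_\alpha=\alpha(h)e_\alpha$. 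Thus the lemma reduces to the claim that there is an $h\in\mathcal{H}$ with $c_\alpha=\alpha(h)$ for every root $\alpha$. Writing the root system $B_2$ of $\mathfrak{so}_5(\mathbb{F})$ with simple roots $\alpha_1$ (long) and $\alpha_2$ (short), so that the positive roots are $\alpha_1,\alpha_2,\alpha_1+\alpha_2,\alpha_1+2\alpha_2$, it suffices to verify the two additivity relations $c_{\alpha_1+\alpha_2}=c_{\alpha_1}+c_{\alpha_2}$ and $c_{\alpha_1+2\alpha_2}=c_{\alpha_1}+2c_{\alpha_2}$; defining $h$ by $\alpha_1(h)=c_{\alpha_1}$ and $\alpha_2(h)=c_{\alpha_2}$ then yields $\Delta=\textrm{ad}(h)$.

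To establish these relations I would feed the local derivation suitable test elements and compare root-space components, in the spirit of the case analysis in Lemma~\ref{restriction}. For the first relation, apply $\Delta(x)=[a,x]$ to $x=e_{\alpha_1}+e_{\alpha_2}+e_{\alpha_1+\alpha_2}$, where $a=h_a+\sum_\delta a_\delta e_\delta$. Since $\alpha_1-\alpha_2$ is not a root, the $e_{\alpha_1}$- and $e_{\alpha_2}$-components give $\alpha_1(h_a)=c_{\alpha_1}$ and $\alpha_2(h_a)=c_{\alpha_2}$ once the Cartan component and the components along the roots $\pm\alpha_1,\pm\alpha_2,\alpha_1+2\alpha_2$ are used to force the relevant coefficients $a_\delta$ to vanish, while the $e_{\alpha_1+\alpha_2}$-component reads $c_{\alpha_1+\alpha_2}=(\alpha_1+\alpha_2)(h_a)+(a_{\alpha_1}-a_{\alpha_2})\,n_{\alpha_1,\alpha_2}$. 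As $(\alpha_1+\alpha_2)(h_a)=c_{\alpha_1}+c_{\alpha_2}$, it remains to force $a_{\alpha_1}=a_{\alpha_2}$; this is arranged by augmenting the test element with a Cartan element $h$ having prescribed values on $\alpha_1+\alpha_2$ and on the neighbouring roots (as with the choice $\beta(h)=1,\ (\beta+\alpha)(h)=0$ in Lemma~\ref{restriction}), so that the extra summand $[a,h]=-\sum_\delta\delta(h)a_\delta e_\delta$ isolates and annihilates the offending coefficients. The second relation is handled the same way, now using the length-three $\alpha_2$-string $\alpha_1,\ \alpha_1+\alpha_2,\ \alpha_1+2\alpha_2$ through $\alpha_1$ together with the first relation; this is the analogue of Case~2 of Lemma~\ref{restriction} and needs one further layer of components.

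Once both relations hold, we take $h\in\mathcal{H}$ with $\alpha_1(h)=c_{\alpha_1}$ and $\alpha_2(h)=c_{\alpha_2}$; then $\alpha(h)=c_\alpha$ for all $\alpha\in R$ (the negative roots being automatic from $c_{-\alpha}=-c_\alpha$), so $\Delta=\textrm{ad}(h)$ is a derivation. The main obstacle is precisely the middle step: the implementing elements $a=a_x$ carry many free coefficients $a_\delta$, and the structural cross-terms $[e_\delta,e_\gamma]$ contaminate the component along $\alpha_1+\alpha_2$ in a way that no single two- or three-root test can remove — a clean pair of root vectors only pins the value of $h_a$ at the two roots involved and is agnostic about their sum. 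Forcing coefficients such as $a_{\alpha_1}$ and $a_{\alpha_2}$ to coincide or to vanish is what makes the argument delicate, and it is here that the non-simply-laced structure of $B_2$, with its root strings of length three, forces the extra cases, in contrast to the type-$A$ situation settled directly by Bre\v{s}ar's theorem in Lemma~\ref{typea}.
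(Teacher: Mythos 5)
Your opening reduction is sound: once $\Delta|_{\mathcal{H}}\equiv 0$ and \eqref{calpha} are in hand, proving the lemma is indeed equivalent to finding $h\in\mathcal{H}$ with $\alpha(h)=c_\alpha$ for every root, i.e.\ to your two additivity relations. The gap is that the test elements you propose cannot prove those relations. Write $\gamma=\alpha_1+\alpha_2$, $\theta=\alpha_1+2\alpha_2$, and take the bare element $x=e_{\alpha_1}+e_{\alpha_2}+e_{\gamma}$. Expanding $[a,x]$ with $a=h_a+\sum_\delta a_\delta e_\delta$ and matching components, the components along $e_{-\alpha_1}$, $e_{-\gamma}$, $\mathcal{H}$ and $e_\theta$ force $a_{-\gamma}=a_{-\theta}=a_{-\alpha_1}=a_{-\alpha_2}=0$ and $a_\gamma=a_{\alpha_2}$, leaving exactly the three equations $c_{\alpha_1}=\alpha_1(h_a)$, $c_{\alpha_2}=\alpha_2(h_a)$ and $c_\gamma=\gamma(h_a)+(a_{\alpha_1}-a_{\alpha_2})n_{\alpha_1,\alpha_2}$, in which the quantity $a_{\alpha_1}-a_{\alpha_2}$ occurs nowhere else. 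Hence for \emph{arbitrary} scalars $c_{\alpha_1},c_{\alpha_2},c_\gamma$ one can satisfy $\Delta(x)=[a,x]$: choose $h_a$ to match the first two equations and set $a_{\alpha_1}-a_{\alpha_2}=(c_\gamma-c_{\alpha_1}-c_{\alpha_2})/n_{\alpha_1,\alpha_2}$. So this element yields no relation at all among the three scalars. Your proposed repair makes matters worse, not better: adding a Cartan element $h$ contributes $[a,h]=-\sum_\delta\delta(h)a_\delta e_\delta$, which injects the unknowns $a_{\alpha_1},a_{\alpha_2},a_\gamma$ into precisely the three components you need to keep clean, entangled with the unknown $h_a$; for generic $h$ the enlarged system is again solvable for arbitrary $(c_{\alpha_1},c_{\alpha_2},c_\gamma)$ (take $a_{\alpha_1}=a_{\alpha_2}=0$ and solve for $h_a$, $a_\gamma$, $a_\theta$). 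The device of Lemma~\ref{restriction} that you invoke does not transfer: there the offending coefficient $b_\beta$ was detected by a \emph{different} component (via $\beta(h)=1$) while the target component was shielded (via $(\beta+\alpha)(h)=0$); here the offending quantity $a_{\alpha_1}-a_{\alpha_2}$ lives only in the very component being tested, so no choice of $h$ can capture it.

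The structural idea you are missing is the one the paper's proof is built on. A usable test element must be supported on a triple of roots that is linearly dependent (so that vanishing of the Cartan part $h_b$ on two of them forces vanishing on the third) \emph{and} pairwise non-adjacent, i.e.\ no difference of two members is a root, so that the nilpotent part of the implementing element cannot touch the three relevant components at all. With your anchor $\{\pm\alpha_1,\pm\alpha_2\}$ no such triple through $\gamma$ exists: for any $\gamma_1\in\{\pm\alpha_1\}$, $\gamma_2\in\{\pm\alpha_2\}$, one of $\gamma\mp\alpha_1$, $\gamma\mp\alpha_2$ is again a root, so contamination is unavoidable --- the ``delicate'' obstacle you describe is structural, not a matter of bookkeeping. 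The paper instead anchors at the two \emph{orthogonal long roots} $\pm\alpha_1,\pm\theta$: since $[e_{\alpha_1},e_{\theta}]=0$, Lemma~\ref{restriction} shows the subalgebra $\textrm{span}\{h_{\alpha_1},h_{\theta},e_{\pm\alpha_1},e_{\pm\theta}\}\cong\mathfrak{sl}_2(\mathbb{F})\oplus\mathfrak{sl}_2(\mathbb{F})$ is invariant, Lemma~\ref{typeasum} (the Bre\'sar--\'Semrl theorem \cite{Bresar} through type $A$) makes $\Delta$ equal to some $\textrm{ad}(h_1)$ there, and for $T=\Delta-\textrm{ad}(h_1)$ one then uses triples such as $\{\alpha_1,\alpha_2,-\theta\}$ and $\{-\alpha_1,\gamma,-\theta\}$ --- note the sign on the third root --- whose pairwise differences ($\alpha_1-\alpha_2$, $2\alpha_1+2\alpha_2$, $\alpha_1+3\alpha_2$, etc.) are all non-roots. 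For these, $T(x)=[b,x]$ forces $\alpha_1(h_b)=\theta(h_b)=0$, hence $\alpha_2(h_b)=0$ because $\alpha_2=\frac{1}{2}(\theta-\alpha_1)$, hence $T(e_{\alpha_2})=0$, and similarly for the remaining short roots. To salvage your scheme you would have to reproduce this anchoring step and these non-interacting triples; with the simple-root anchor they do not exist.
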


\begin{proof} Let  $\{\pm\alpha, \pm \beta, \pm(\alpha+\beta),
\pm(\alpha+2\beta)\}$ be the root system of
$\mathfrak{so}_5(\mathbb{F}).$

Since $\alpha+3\beta=(\alpha+2\beta)+\beta$ is not  a root,  it
follows that $[e_\alpha, e_{\alpha+2\beta}]=0.$ Thus
$$
\textrm{alg}\left\{e_{\pm\alpha},
e_{\pm(\alpha+2\beta)}\right\}=\textrm{span}\left\{h_\alpha,
h_{\alpha+2\beta}, e_{\pm\alpha},
e_{\pm(\alpha+2\beta)}\right\}\cong
\mathfrak{sl}_2(\mathbb{F})\oplus \mathfrak{sl}_2(\mathbb{F}),
$$
where $\textrm{alg}(S)$ is the Lie subalgebra of
$\mathfrak{so}_5(\mathbb{F})$ generated by a subset $S\subset
\mathfrak{so}_5(\mathbb{F}).$ By Lemma~\ref{restriction} $\Delta$
maps $\textrm{alg}\left\{e_{\pm\alpha},
e_{\pm(\alpha+2\beta)}\right\}$ into itself. Since
$$
\textrm{alg}\left\{e_{\pm\alpha},
e_{\pm(\alpha+2\beta)}\right\}\cong
\mathfrak{sl}_2(\mathbb{F})\oplus \mathfrak{sl}_2(\mathbb{F}),
$$
Lemma~\ref{typeasum} implies that
$\Delta|_{\textrm{alg}\left\{e_{\pm\alpha},
e_{\pm(\alpha+2\beta)}\right\}}$ is a derivation. Then there
exists an element $h_1\in H$ such that
$\Delta|_{\textrm{alg}\left\{e_{\pm\alpha},
e_{\pm(\alpha+2\beta)}\right\}}=\textrm{ad}(h_1),$ because
$\Delta|_{\textrm{alg}\left\{e_{\pm\alpha},
e_{\pm(\alpha+2\beta)}\right\}}$ is identically zero on a Cartan
subalgebra of $\textrm{alg}\left\{e_{\pm\alpha},
e_{\pm(\alpha+2\beta)}\right\}.$

Set
$$
T=\Delta-\textrm{ad}(h_1).
$$
Then
$$
T(e_{\pm\alpha})=T(e_{\pm(\alpha+2\beta)})=0.
$$

Let us  show that
$$
T(e_{\pm\beta})=T(e_{\pm(\alpha+\beta)})=0.
$$

We have
\begin{equation}\label{abe}
T(e_{\alpha}+e_{\beta}+e_{-(\alpha+2\beta)})=\mu_\beta e_\beta.
\end{equation}

On the other hand
\begin{eqnarray*}
T(e_{\alpha}+e_{\beta}+e_{-(\alpha+2\beta)}) & = & [b,
e_{\alpha}+e_{\beta}+e_{-(\alpha+2\beta)}]=\\
& = &
\alpha(h_b)e_\alpha+\beta(h_b)e_\beta-(\alpha+2\beta)(h_b)e_{-(\alpha+2\beta)}+\\
& + & \ast
e_{\beta+\alpha}+\ast h_\alpha+\ast e_{-\beta}+\\
& + & \ast e_{\beta+\alpha}+\ast e_{\alpha+2\beta}+\ast
h_{\beta}+\ast e_{-\alpha}+ \ast e_{-(\alpha+\beta)}+
\\
& + & \ast e_{-(\alpha+\beta)}+\ast e_{-\beta}+\ast
h_{\alpha+2\beta},
\end{eqnarray*}
where the symbols $\ast$ denote appropriate coefficients. We see that  the last three
rows in this equality does not contain $e_\alpha, e_\beta$ and
$e_{-(\alpha+2\beta)}.$ Comparing  the last  equality with
\eqref{abe}, we obtain that
$$
\alpha(h_b)=(\alpha+2\beta)(h_b)=0
$$
and
$$
\mu_\beta=\beta(h_b).
$$
The first two equalities give us $\alpha(h_b)=\beta(h_b)=0,$ and
therefore $\mu_\beta=0.$ Thus
$$
T(e_{\beta})=0.
$$

In a similar way we obtain
$$
T(e_{-\beta})=0.
$$

Now we will check that
$$
T(e_{\pm(\alpha+\beta)})=0.
$$

We have
$$
T(e_{-\alpha}+e_{\alpha+\beta}+e_{-(\alpha+2\beta)})=\mu_{\alpha+\beta}
e_{\alpha+\beta}.
$$

On the other hand
\begin{eqnarray*}
T(e_{-\alpha}+e_{\alpha+\beta}+e_{-(\alpha+2\beta)}) & = & [c,
e_{-\alpha}+e_{\alpha+\beta}+e_{-(\alpha+2\beta)}]=\\
& = &
-\alpha(h_c)e_{-\alpha}+(\alpha+\beta)(h_c)e_{\alpha+\beta}-(\alpha+2\beta)(h_c)e_{-(\alpha+2\beta)}+\\
& + & \ast
h_{\alpha}+\ast e_{\beta}+\ast e_{-\alpha-\beta}+\\
& + & \ast e_{\alpha+2\beta}+\ast e_{\beta}+\ast e_{\alpha}+\ast
h_{\alpha+\beta}+ \ast e_{-\beta}+
\\
& + & \ast e_{-(\alpha+\beta)}+\ast e_{-\beta}+\ast
h_{\alpha+2\beta}.
\end{eqnarray*}
As in the previous case comparing coefficients of $e_{_\alpha},
e_{\alpha+\beta}$ and $e_{-(\alpha+2\beta)}$ in the last two
equalities, we obtain that
$$
\alpha(h_c)=(\alpha+2\beta)(h_c)=0
$$
and
$$
\mu_{\alpha+\beta}=(\alpha+\beta)(h_c).
$$
The first  equalities give us $\alpha(h_c)=\beta(h_c)=0,$ and
therefore $\mu_{\alpha+\beta}=0.$ Thus
$$
T(e_{\alpha+\beta})=0.
$$

Similarly
$$
T(e_{-(\alpha+\beta)})=0.
$$
So, we have proved that $T=0.$ Thus $\Delta=\textrm{ad}(h_1)$ is a
derivation. The proof is complete.
\end{proof}

\subsection{Local derivations on the exceptional Lie algebra
$\mathfrak{g}_2$}

\begin{lemma}\label{gtwo}
Any local derivation on $\mathfrak{g}_2$ is a derivation.
\end{lemma}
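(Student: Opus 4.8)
The plan is to follow the pattern of the proof of Lemma~\ref{sofive}, with the root geometry of $\mathfrak{g}_2$ in place of that of $\mathfrak{so}_5$. By the reduction made just before this subsection I may assume $\Delta|_{\mathcal{H}}\equiv 0$, so that Lemma~\ref{restriction} gives $\Delta(e_{\pm\gamma})=\pm c_\gamma e_{\pm\gamma}$ for every root $\gamma$ and suitable scalars $c_\gamma\in\mathbb{F}$. The first step is to dispose of the long roots at one stroke. The long roots of $\mathfrak{g}_2$ form a closed subsystem of type $A_2$, so $\mathcal{M}=\textrm{alg}\{e_\gamma:\gamma\ \text{long}\}\cong\mathfrak{sl}_3(\mathbb{F})$, and its Cartan subalgebra is all of $\mathcal{H}$ because the long roots span $\mathcal{H}^\ast$. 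By Lemma~\ref{restriction}, $\Delta$ maps $\mathcal{M}$ into itself; moreover $\Delta|_{\mathcal{M}}$ is a local derivation of $\mathcal{M}$, since for $x\in\mathcal{M}$ the part of $[a_x,x]$ coming from the short-root terms of $a_x$ lands in short-root spaces (a short root is never a difference of two long roots) and hence must vanish. Lemma~\ref{typea} then shows $\Delta|_{\mathcal{M}}$ is a derivation, and being zero on $\mathcal{H}$ it equals $\textrm{ad}(h_1)$ for some $h_1\in\mathcal{H}$; in particular $c_\gamma=\gamma(h_1)$ for all long $\gamma$.

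Setting $T=\Delta-\textrm{ad}(h_1)$, I obtain a local derivation with $T|_{\mathcal{H}}\equiv 0$, with $T(e_{\pm\gamma})=0$ for every long root $\gamma$, and with $T(e_{\pm\delta})=\pm c'_\delta e_{\pm\delta}$ for the short roots $\delta$ by Lemma~\ref{restriction}. Since $c'_{-\delta}=-c'_\delta$, it remains only to prove $c'_\delta=0$ for each positive short root $\delta$; this gives $T=0$ and hence $\Delta=\textrm{ad}(h_1)$.

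For this last step I would imitate the coefficient comparison of Lemma~\ref{sofive}. For a fixed short root $\delta$ choose two long roots $\gamma_1,\gamma_2$ that are linearly independent (so they form a basis of $\mathcal{H}^\ast$) and such that none of the three differences $\gamma_1-\gamma_2$, $\gamma_1-\delta$, $\gamma_2-\delta$ is a root; such a pair exists for every short root, which one checks on an explicit coordinate model of the $G_2$ system and then propagates to the remaining short roots by the transitivity of the Weyl group on roots of a fixed length. With $u=e_{\gamma_1}+e_\delta+e_{\gamma_2}$ one has $T(u)=c'_\delta e_\delta$, while the local derivation property provides $b=h_b+\sum_\mu b_\mu e_\mu$ with $T(u)=[b,u]$. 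Because the three pairwise differences are not roots, no bracket $[e_\mu,e_\nu]$ with $\nu\in\{\gamma_1,\delta,\gamma_2\}$ can land in the root spaces of $\gamma_1$, $\delta$ or $\gamma_2$; hence the coefficients of $e_{\gamma_1}$, $e_\delta$, $e_{\gamma_2}$ in $[b,u]$ are exactly $\gamma_1(h_b)$, $\delta(h_b)$, $\gamma_2(h_b)$. Comparing with $T(u)=c'_\delta e_\delta$ forces $\gamma_1(h_b)=\gamma_2(h_b)=0$, and since $\gamma_1,\gamma_2$ span $\mathcal{H}^\ast$ this yields $\delta(h_b)=0$, so $c'_\delta=\delta(h_b)=0$.

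The main obstacle is precisely this last bookkeeping: producing, for each short root, an admissible long-root pair and verifying that the $\mathfrak{g}_2$ structure constants leak no cross-term into the three monitored root spaces. This is the delicate geometric input that drove the $\mathfrak{so}_5$ computation, and in $\mathfrak{g}_2$ it is heavier because there are twelve roots of two different lengths. I would tame it by fixing coordinates for the root system once and for all and cutting down the number of cases by Weyl symmetry, treating a single representative short root in detail and transporting the conclusion to the others.
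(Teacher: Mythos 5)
Your proof is correct, and it takes a genuinely different route from the paper's. The paper starts from the commuting pair $e_{\pm\alpha}$, $e_{\pm(3\alpha+2\beta)}$, applies Lemma~\ref{typeasum} to $\textrm{alg}\left\{e_{\pm\alpha}, e_{\pm(3\alpha+2\beta)}\right\}\cong \mathfrak{sl}_2(\mathbb{F})\oplus\mathfrak{sl}_2(\mathbb{F})$, and, after subtracting $\textrm{ad}(h_1)$, eliminates the remaining eight root vectors by a sequential cascade of coefficient comparisons in the specific order $\pm\beta$, $\pm(2\alpha+\beta)$, $\pm(3\alpha+\beta)$, $\pm(\alpha+\beta)$, using test elements such as $e_{\alpha}+e_{\beta}+e_{-(3\alpha+2\beta)}$ and reusing the vanishings already established at each step. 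You instead take the long-root subalgebra $\mathcal{M}=\mathcal{H}\oplus\textrm{span}\left\{e_{\pm\beta}, e_{\pm(3\alpha+\beta)}, e_{\pm(3\alpha+2\beta)}\right\}\cong\mathfrak{sl}_3(\mathbb{F})$, kill all six long root vectors with a single application of Lemma~\ref{typea}, and then treat the six short roots in parallel, each by one comparison, with Weyl transitivity reducing everything to one representative. Your route buys a more symmetric endgame with no bootstrap ordering; its extra cost is the verification that $\Delta|_{\mathcal{M}}$ is a local derivation \emph{of} $\mathcal{M}$ (that implementing elements can be chosen inside $\mathcal{M}$), which you supply correctly via the observation that a short root is never a difference of two long roots --- a point that, incidentally, the paper also needs (silently) when it restricts $\Delta$ to its own subalgebra.

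The one item you defer --- existence, for each short root $\delta$, of an admissible pair of long roots --- does check out, so there is no gap. For $\delta=\alpha$ and for $\delta=2\alpha+\beta$ take $(\gamma_1,\gamma_2)=\left(\beta,\,-(3\alpha+2\beta)\right)$; for $\delta=\alpha+\beta$ take $(\gamma_1,\gamma_2)=\left(-\beta,\,-(3\alpha+\beta)\right)$; the negatives of these pairs serve the negative short roots. In each case $\gamma_1,\gamma_2$ are linearly independent and the differences $\gamma_1-\gamma_2$, $\gamma_1-\delta$, $\gamma_2-\delta$ (namely $3\alpha+3\beta$, $\beta-\alpha$, $-(4\alpha+2\beta)$ in the first case, $3\alpha+3\beta$, $-2\alpha$, $-(5\alpha+3\beta)$ in the second, and $3\alpha$, $-(\alpha+2\beta)$, $-(4\alpha+2\beta)$ in the third) all lie outside the root system, so your coefficient comparison closes exactly as you describe.
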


\begin{proof}
Let  $\{\pm\alpha, \pm \beta, \pm(\alpha+\beta),
\pm(2\alpha+\beta), \pm(3\alpha+\beta), \pm (3\alpha+2\beta)\}$ be
the root system of $\mathfrak{g}_2.$

Since $[e_\alpha, e_{3\alpha+2\beta}]=0,$ it follows  that
$$
\textrm{alg}\left\{e_{\pm\alpha},
e_{\pm(3\alpha+2\beta)}\right\}=\textrm{span}\left\{h_\alpha,
h_{3\alpha+2\beta}, e_{\pm\alpha},
e_{\pm(3\alpha+2\beta)}\right\}\cong
\mathfrak{sl}_2(\mathbb{F})\oplus \mathfrak{sl}_2(\mathbb{F}).
$$
By Lemma~\ref{restriction} the local derivation $\Delta$ maps
$\textrm{alg}\left\{e_{\pm\alpha},
e_{\pm(3\alpha+2\beta)}\right\}$ into itself. Since
$$
\textrm{alg}\left\{e_{\pm\alpha},
e_{\pm(3\alpha+2\beta)}\right\}\cong
\mathfrak{sl}_2(\mathbb{F})\oplus \mathfrak{sl}_2(\mathbb{F})
$$
Lemma~\ref{typeasum} implies that
$\Delta|_{\textrm{alg}\left\{e_{\pm\alpha},
e_{\pm(3\alpha+2\beta)}\right\}}$ is a derivation. Therefore there
exists $h_1\in H$ such that
$\Delta|_{\textrm{alg}\left\{e_{\pm\alpha},
e_{\pm(3\alpha+2\beta)}\right\}}=\textrm{ad}(h_1).$

Set
$$
T=\Delta-\textrm{ad}(h_1).
$$
Then
$$
T(e_{\pm\alpha})=T(e_{\pm(3\alpha+2\beta)})=0.
$$

Let us to show that
$$
T(e_{\sigma})=0,
$$
where $\sigma=\pm \beta, \pm(\alpha+\beta), \pm(2\alpha+\beta),
\pm(3\alpha+\beta).$

Let us consider the case   $\sigma=\pm\beta.$ Then
\begin{equation}\label{alphabeta}
T(e_{\alpha}+e_{\beta}+e_{-(3\alpha+2\beta)})=\mu_\beta e_\beta.
\end{equation}

On the other hand
\begin{eqnarray*}\label{alpha}
T(e_{\alpha}+e_{\beta}+e_{-(3\alpha+2\beta)}) & = & [c,
e_{\alpha}+e_{\beta}+e_{-(3\alpha+2\beta)}]=\\
& = &
\alpha(h_c)e_\alpha+\beta(h_c)e_\beta-(3\alpha+2\beta)(h_c)e_{-(3\alpha+2\beta)}+\\
& + & \sum\limits_{\gamma\in R} \left(c_\gamma n_{\gamma, \alpha}
e_{\gamma+\alpha}+c_{\gamma}n_{\gamma,
\beta}e_{\gamma+\beta}+c_{\gamma} n_{\gamma,
-(3\alpha+2\beta)}e_{\gamma-3\alpha-\beta}\right).
\end{eqnarray*}
Direct computations shows that the third summand in the last
equality  does not contain  $e_\alpha, e_\beta$ and
$e_{-(3\alpha+2\beta)}.$ Comparing these components with same
components in \eqref{alphabeta}, we obtain that
$$
\alpha(h_c)=(3\alpha+2\beta)(h_c)=0
$$
and
$$
\mu_\beta=\beta(h_c).
$$
The first  equalities give us $\alpha(h_c)=\beta(h_c)=0,$ and
therefore $\mu_\beta=0.$ Thus $T(e_{\beta})=0.$

In the same way we obtain that  $T(e_{-\beta})=0.$

The remaining  cases  of $\sigma$ are similar and can be checked in the
following order:
$$
\pm(2\alpha+\beta),  \pm(3\alpha+\beta), \pm(\alpha+\beta).
$$
In these cases instead of
$e_{\alpha}+e_{\beta}+e_{-(3\alpha+2\beta)}$ we should use the
following elements:
$$
e_{\pm\beta}+ e_{\pm(2\alpha+\beta)}+ e_{\pm(3\alpha+2\beta)},
\,\,\, e_{\pm\beta}+ e_{\pm(3\alpha+\beta)}+
e_{\mp(3\alpha+2\beta)}, \,\,\, e_{\pm(3\alpha+\beta)}+
e_{\pm(\alpha+\beta)}+ e_{\mp\beta},
$$
respectively. The proof is complete.
\end{proof}

Now we are in position to give the proof  of the main result.

The main ingredient  of the proof is  reduction of the general case
to  the case of rank 2 simple Lie algebras.

\begin{proof}[\textit{Proof of Theorem~$\ref{localsemisimple}$}]
Let $\Delta:\mathcal{L}\rightarrow \mathcal{L}$ be a local
derivation and suppose that $h_0\in \mathcal{H}$ is the element defined
by~\eqref{transc}. Take an element $a\in \mathcal{L}$ (depending
on $h_0$) such that
$$
\Delta(h_0)=[a, h_0].
$$
Replacing, if necessary, the local derivation $\Delta$ by  $\Delta-\mbox{ad}(a),$ we
may assume that $\Delta(h_0)=0,$ and therefore by
Lemma~\ref{zeroo} $\Delta|_{\mathcal{H}}\equiv 0.$

Firstly note that $\Delta|_\mathcal{H}$ is a derivation, because
it is identically zero.

Let us show that
\begin{equation}\label{he}
 \Delta([h, e_\alpha])=[\Delta(h), e_\alpha]+[h, \Delta(e_\alpha)]
\end{equation}
for all $h\in \mathcal{H}$ and $\alpha\in R.$ Indeed, taking into
account that $\Delta|_\mathcal{H}\equiv 0$ and the
equality~\eqref{calpha} we have
\begin{eqnarray*}
\Delta([h, e_\alpha]) & = & \Delta(\alpha(h)e_\alpha)=\alpha(h)
\Delta(e_\alpha)=\\
& = &  \alpha(h) c_\alpha e_\alpha=c_\alpha [h, e_\alpha]=[h,
c_\alpha e_\alpha]=\\
& = & [h, \Delta(e_\alpha)]=[\Delta(h),e_\alpha]+[h,
\Delta(e_\alpha)].
\end{eqnarray*}

Now we show that
\begin{equation}\label{bss}
\vec{}\Delta([e_\alpha, e_\beta])=[\Delta(e_\alpha),
e_\beta]+[e_\alpha, \Delta(e_\beta)]
\end{equation}
for all $\alpha, \beta\in R.$ By Lemma~\ref{restriction} $\Delta$
maps $\textrm{alg}\left\{e_{\pm\alpha}, e_{\pm\beta)}\right\}$
into itself.

It is suffices to consider the following  four possible cases (see
\cite[P. 44]{Humphreys}):
\begin{enumerate}
\item[1.] $\textrm{alg}\left\{e_{\pm\alpha},
e_{\pm\beta)}\right\}\cong \mathfrak{sl}_2(\mathbb{F})\oplus
\mathfrak{sl}_2(\mathbb{F});$
\item[2.] $\textrm{alg}\left\{e_{\pm\alpha},
e_{\pm\beta)}\right\}\cong \mathfrak{sl}_3(\mathbb{F});$
\item[3.] $\textrm{alg}\left\{e_{\pm\alpha},
e_{\pm\beta)}\right\}\cong \mathfrak{so}_5(\mathbb{F});$
\item[4.] $\textrm{alg}\left\{e_{\pm\alpha},
e_{\pm\beta)}\right\}\cong \mathfrak{g}_2.$
\end{enumerate}
In the first two cases Lemma~\ref{typeasum} implies that
$\Delta|_{\textrm{alg}\left\{e_{\pm\alpha},
e_{\pm\beta)}\right\}}$ is a derivation and therefore $\Delta$
satisfies \eqref{bss}.

In a similar way in the remaining  two cases we can apply
Lemmas~\ref{sofive} and~\ref{gtwo}, respectively.

Finally, taking into account the linearity of $\Delta,$ the
equalities~\eqref{he} and~\eqref{bss}, we obtain
$$
\Delta([x,y])=[\Delta(x),y]+[x,\Delta(y)]
$$
for all $x,y\in \mathcal{L}.$ The proof is complete.
\end{proof}

\section{Local derivations on filiform   Lie algebras}\label{filiform}

In this section we consider a special class of nilpotent Lie algebras, so-called filiform Lie algebras,
and show that they  admit local derivations which are not derivations.

A Lie algebra $\mathcal{L}$ is called \textit{filiform} if $\dim
\mathcal{L}^k=n-k-1$ for $1\leq k \leq n-1,$  where
$\mathcal{L}^0=\mathcal{L},$ $\mathcal{L}^k=[\mathcal{L}^{k-1},
\mathcal{L}],\, k\geq1.$

\begin{theorem}\label{filiform}
Let $\mathcal{L}$ be a finite-dimensional filiform   Lie algebra
with $\dim \mathcal{L}\geq 3.$ Then $\mathcal{L}$ admits a local
derivation which is not a derivation.
\end{theorem}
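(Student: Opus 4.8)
The plan is to reduce to a structural dichotomy among filiform algebras and, in each branch, exhibit an explicit linear map that is manifestly a local derivation yet violates the Leibniz rule on the single bracket $[e_1,e_2]=e_3$. First I would invoke the structure theory to fix a (suitably normalized) \emph{adapted basis} $\{e_1,\dots,e_n\}$ with $[e_1,e_i]=e_{i+1}$ for $2\le i\le n-1$, $[e_1,e_n]=0$, and $\mathcal{L}^1=[\mathcal{L},\mathcal{L}]=\mathrm{span}\{e_3,\dots,e_n\}$, where $e_n$ spans the one-dimensional center. Putting $V=\mathrm{span}\{e_2,\dots,e_n\}$, the decisive point is whether $[\mathcal{L}^1,V]=0$ or not. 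I would check that $[\mathcal{L}^1,V]=0$ forces \emph{all} brackets among $e_2,\dots,e_n$ to vanish, i.e. $\mathcal{L}$ is the \emph{model} filiform algebra $L_n$; contrapositively, any non-model $\mathcal{L}$ admits $w\in\mathcal{L}^1$ and $j\ge2$ with $[w,e_j]\neq0$. This splits the theorem into the model case and the non-model case.

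In the non-model case only inner derivations are needed. I would set $\Delta:=\mathrm{ad}(e_1)+E$, where $E(e_1)=w$, $E(e_i)=0$ for $i\ge2$, and $w\in\mathcal{L}^1$ is chosen with $[w,e_j]\neq0$ for some $j\ge2$. For $x\in V$ one has $\Delta(x)=[e_1,x]=\mathrm{ad}(e_1)(x)$, so $D_x=\mathrm{ad}(e_1)$ witnesses the local property; for $x$ with nonzero $e_1$-coordinate, $\Delta(x)\in\mathcal{L}^1$, and $\mathrm{ad}(x)$ maps $\mathcal{L}$ \emph{onto} $\mathcal{L}^1$ (on the associated graded its leading part is $x_1$ times the index shift, hence triangular and surjective), so some $\mathrm{ad}(a_x)$ realizes $\Delta(x)$. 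Thus $\Delta$ is a local derivation. Since $\mathrm{ad}(e_1)$ is a derivation while $E$ is not (on $[e_1,e_j]=e_{j+1}$ the left side is $E(e_{j+1})=0$ and the right side is $[w,e_j]\neq0$), $\Delta$ is not a derivation.

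For the model $L_n$ the inner derivations are too rigid, so instead I would take the diagonal map $\Delta=\mathrm{diag}(1,1,0,\dots,0)$, that is $\Delta(e_1)=e_1$, $\Delta(e_2)=e_2$, $\Delta(e_i)=0$ for $i\ge3$. A diagonal map is a derivation only when its weights obey $\lambda_{i+1}=\lambda_1+\lambda_i$; here $\lambda_3=0\neq2=\lambda_1+\lambda_2$, so $\Delta$ is not a derivation. To see it is a local derivation I would use the rich derivation algebra of $L_n$: the torus derivations $D_w$ (with $w_1,w_2$ free and $w_i=w_2+(i-2)w_1$), the shift derivations $S^k$ (powers of $S\colon e_i\mapsto e_{i+1}$, which are genuine derivations because $\mathrm{im}\,S\subseteq V$ and $V$ is abelian in $L_n$), and inner derivations. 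For $x_1\neq0$, $\Delta(x)=x_1e_1+x_2e_2$ equals $D_w(x)$ (take $w_1=w_2=1$) corrected by an element of $\mathcal{L}^1=\mathrm{im}\,\mathrm{ad}(x)$; for $x\in V$, $\Delta(x)=x_2e_2$ is obtained from $P(x)=x$ (the torus derivation with $w_1=0$, $w_2=1$) by subtracting $\sum_{i\ge3}x_ie_i$, which a combination $\sum_k c_kS^k(x)$ produces by solving a triangular linear system whose diagonal entries all equal $x_2$.

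The part demanding the most care, and the genuine obstacle, is the model case: one must justify that the prescribed targets actually lie in the derivation orbit, which rests on the two surjectivity facts (that $\mathrm{ad}(x)$ covers $\mathcal{L}^1$ when $x_1\neq0$ and that the shifts $S^k$ solve the triangular system on $V$ when $x_2\neq0$), together with a clean proof of the dichotomy singling out $L_n$ as the unique filiform algebra for which the inner/shift constructions are forced. I would also record the base case $n=3$, where every filiform algebra is the Heisenberg algebra $L_3$ and $\Delta=\mathrm{diag}(1,1,0)$ already does the job, giving $\Delta([e_1,e_2])=0\neq2e_3=[\Delta e_1,e_2]+[e_1,\Delta e_2]$.
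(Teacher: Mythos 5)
Your dichotomy (that $[\mathcal{L}^1,V]=0$ characterizes the model algebra $L_n$) and your treatment of the model case are essentially sound (modulo the trivial subcase $x_1=x_2=0$, where $\Delta(x)=0$). The genuine gap is in the non-model branch: the claim that $\mathrm{ad}(x)$ maps $\mathcal{L}$ onto $\mathcal{L}^1$ whenever $x_1\neq0$ is false. Your triangularity argument overlooks that $e_2$ sits in the same filtration level as $e_1$, so the leading part of $\mathrm{ad}(x)$ on the associated graded algebra is not $x_1$ times the shift but $x_1S+x_2\,\mathrm{ad}_{\mathrm{gr}}(e_2)$, and $\mathrm{ad}_{\mathrm{gr}}(e_2)\neq0$ precisely when the graded algebra is the non-model filiform algebra $Q_n$ ($n$ even), in which $[e_2,e_{n-1}]=e_n$. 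Concretely, let $\mathcal{L}=Q_6$ with $[e_1,e_i]=e_{i+1}$ $(2\le i\le5)$, $[e_2,e_5]=e_6$, $[e_3,e_4]=-e_6$. For $x=e_1-e_2$ one computes $[x,e_1]=[x,e_2]=e_3$, $[x,e_3]=e_4$, $[x,e_4]=e_5$, $[x,e_5]=[x,e_6]=0$, so $\mathrm{ran}\,\mathrm{ad}(x)=\mathrm{span}\{e_3,e_4,e_5\}$ misses $e_6$. Worse, no admissible $w$ repairs this: admissibility ($[w,e_j]\neq0$ for some $j\ge2$) forces $(w_3,w_4,w_5)\neq(0,0,0)$, while for $x=e_1-e_2+x_3e_3+x_4e_4+x_5e_5$ one finds $\mathrm{ran}\,\mathrm{ad}(x)=\mathrm{span}\{e_3-x_5e_6,\,e_4+x_4e_6,\,e_5-x_3e_6\}$, so $w\in\mathrm{ran}\,\mathrm{ad}(x)$ would force $w_6=-w_3x_5+w_4x_4-w_5x_3$ for \emph{all} $x_3,x_4,x_5$, hence $w_3=w_4=w_5=0$, a contradiction. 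Thus on $Q_n$ your $\Delta=\mathrm{ad}(e_1)+E$ cannot be certified by inner witnesses at such points; whether it is nonetheless a local derivation would require controlling the full (outer) derivation algebra of every non-model filiform algebra, which your argument never does. So the non-model case is unproven.

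The paper avoids all of this with a single uniform construction requiring no dichotomy: it sets $\Delta\bigl(\sum_k x_ke_k\bigr)=2x_2e_{n-1}+x_3e_n$, shows that $D_{\alpha,\beta}\bigl(\sum_k x_ke_k\bigr)=\alpha x_2e_{n-1}+\beta x_3e_n$ is a derivation if and only if $\alpha=\beta$, and verifies this using only the relations $[e_1,e_i]=e_{i+1}$, $[e_i,e_{n-1}]=0$ for $i\ge3$, and $e_n\in Z(\mathcal{L})$ --- facts valid in \emph{every} filiform algebra, so insensitive to the extra brackets that break your surjectivity claim. The local property is then witnessed explicitly: with $D_1=D_{1,1}$ and the derivation $D_2\bigl(\sum_k x_ke_k\bigr)=x_2e_n$, one has $\Delta(x)=D_1(x)$ when $x_2=0$ and $\Delta(x)=\bigl(2D_1-\tfrac{x_3}{x_2}D_2\bigr)(x)$ when $x_2\neq0$. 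If you wish to keep your outline, the cleanest repair is to replace your non-model map by this uniform one, or else restrict the inner-derivation argument to algebras whose associated graded algebra is $L_n$ and treat the $Q_n$-graded family separately.
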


It is well-known \cite[P. 58]{Goze}  that there is a unique
three-dimensional filiform Lie algebra $\mathcal{L}$ with a basis
$\{e_1, e_2, e_3\}$ and multiplication rule $[e_1, e_2]=e_3.$ This
Lie algebra is known as the \textit{Heisenberg algebra}.

Let $\mathcal{L}$ be a $n$-dimensional filiform Lie algebra with
$n\geq 4.$

It is known \cite{Vergne} that there exists a basis $\{e_1, e_2,
\cdots,  e_n\}$ of  $\mathcal{L}$ such that
\begin{equation}\label{basicfili}
[e_1, e_i]=e_{i+1}
\end{equation}
for all $i\in \overline{2, n-1}.$

Note that a filiform Lie algebra $\mathcal{L}$ besides
\eqref{basicfili}  may have also other non-trivial commutators.

From \eqref{basicfili} it follows that  $\{e_{k+2}, \cdots, e_n\}$
is a basis in $\mathcal{L}^k$ for all $1\leq k \leq n-2$ and
$
e_n\in Z(\mathcal{L}).
$

Since $[\mathcal{L}^1, \mathcal{L}^{n-3}]\subseteq
\mathcal{L}^{n-1}=\{0\},$ it follows that
\begin{equation}\label{nminus}
[e_i, e_{n-1}]=0
\end{equation}
for all $i=3, \cdots, n.$

Now let us define a linear operator $D$ on $\mathcal{L}$ by
\begin{equation}\label{filiformder}
D\left(\sum\limits_{k=1}^n x_ke_k\right)=\alpha x_2 e_{n-1}+\beta
x_3 e_n,
\end{equation}
where $\alpha, \beta\in \mathbb{F}.$

\begin{lemma}\label{filider}
A linear operator $D$ on $\mathcal{L}$ defined
by~\eqref{filiformder} is a derivation if and only if
$\alpha=\beta.$
\end{lemma}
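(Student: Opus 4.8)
The plan is to test the Leibniz identity $D([e_i,e_j])=[D(e_i),e_j]+[e_i,D(e_j)]$ on every pair of basis vectors, which suffices by linearity of $D$ and bilinearity of the bracket. From \eqref{filiformder} the action of $D$ on the basis is $D(e_1)=0$, $D(e_2)=\alpha e_{n-1}$, $D(e_3)=\beta e_n$, and $D(e_k)=0$ for $k\geq 4$; in particular the range of $D$ lies in $\spn\{e_{n-1},e_n\}$, and $D$ kills $e_1$ together with every $e_k$ with $k\geq 4$.

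For the necessity of $\alpha=\beta$ I would look only at the pair $(e_1,e_2)$. Here $[e_1,e_2]=e_3$, so the left side is $D(e_3)=\beta e_n$, whereas the right side equals $[e_1,D(e_2)]=\alpha[e_1,e_{n-1}]=\alpha e_n$ by \eqref{basicfili} with $i=n-1$. Comparing coefficients of $e_n$ gives $\alpha=\beta$, and this single pair is the entire content of the ``only if'' direction.

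For sufficiency, assume $\alpha=\beta$ and check the identity on the remaining pairs in two families. For $(e_1,e_j)$ with $j\geq 3$ both sides vanish: the left side is $D(e_{j+1})=0$ for $3\leq j\leq n-1$ (since $j+1\geq 4$) and is $D(0)=0$ for $j=n$ because $[e_1,e_n]=0$ by centrality of $e_n$, while the right side is $[e_1,D(e_j)]$, equal to $\beta[e_1,e_n]=0$ for $j=3$ and to $0$ for $j\geq 4$. For $(e_i,e_j)$ with $i,j\geq 2$ I claim both sides vanish as well. The right side is controlled by the two structural facts: the only nonzero values of $D$ are $\alpha e_{n-1}$ and $\beta e_n$, and $[e_{n-1},e_j]=0$ for $j\geq 3$ by \eqref{nminus} while $e_n$ is central; the only potentially surviving terms occur in the pair $(2,2)$, where $[\alpha e_{n-1},e_2]+[e_2,\alpha e_{n-1}]=0$ by antisymmetry.

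The main obstacle -- in fact the only step carrying any content -- is showing that the left side $D([e_i,e_j])$ vanishes for $i,j\geq 2$, i.e. that $[e_i,e_j]$ has no $e_2$- or $e_3$-component. Since $[e_i,e_j]\in[\mathcal{L},\mathcal{L}]=\mathcal{L}^1=\spn\{e_3,\dots,e_n\}$, there is never an $e_2$-component, so only the $e_3$-component is at issue. Using $\mathcal{L}^k=\spn\{e_{k+2},\dots,e_n\}$ one has $e_i\in\mathcal{L}^{i-2}$ for $i\geq 2$, whence $[e_i,e_j]\in[\mathcal{L}^{i-2},\mathcal{L}^{j-2}]\subseteq\mathcal{L}^{i+j-3}$ by the standard lower-central-series inclusion $[\mathcal{L}^a,\mathcal{L}^b]\subseteq\mathcal{L}^{a+b+1}$. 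For $i,j\geq 2$ not both equal to $2$ this lands in $\mathcal{L}^2=\spn\{e_4,\dots,e_n\}$, and the case $i=j=2$ is trivial since $[e_2,e_2]=0$; either way $D([e_i,e_j])=0$. Combined with the necessity computation, this yields the stated equivalence. If one prefers to avoid the general inclusion, the only case actually needing it is $(2,3)$, where $[\mathcal{L},\mathcal{L}^1]=\mathcal{L}^2$ holds by definition, the remaining cases following from $\mathcal{L}^1\subseteq\mathcal{L}$ and the monotonicity $\mathcal{L}^{k+1}\subseteq\mathcal{L}^k$.
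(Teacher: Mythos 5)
Your proof is correct and takes essentially the same approach as the paper: necessity is extracted from the single pair $(e_1,e_2)$, and sufficiency is proved by checking that both sides of the Leibniz identity vanish on every other basis pair, using containment of $[e_i,e_j]$ deep in the lower central series to kill the left side and \eqref{nminus} together with centrality of $e_n$ to kill the right side. The only differences are organizational (the paper splits by $i+j=3$ versus $i+j\geq 4$ over pairs $i<j$, while you split by $(1,j)$ versus $i,j\geq 2$, which forces your extra antisymmetry remark for the pair $(2,2)$), not mathematical.
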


\begin{proof}   Suppose that  a linear operator  $D$ defined by~\eqref{filiformder} is a  derivation.

Since $[e_1, e_2]=e_3,$ we have that
\begin{eqnarray*}
D([e_1, e_2]) & = & D(e_3)=\beta e_n.
\end{eqnarray*}
Now using \eqref{basicfili} we obtain that
\begin{eqnarray*}
[D(e_1), e_2]+ [e_1, D(e_2)] & = & [0, e_2]+[e_1, \alpha
e_{n-1}]=\alpha e_n.
\end{eqnarray*}
Thus $\alpha=\beta.$

Conversely, let $D$ be a linear operator defined by \eqref{filiformder}
with $\alpha=\beta.$ We may assume that $\alpha=\beta=1.$

In order to prove that $D$ is a derivation it is sufficient to show that
$$
D([e_i, e_j])=[D(e_i),e_j]+[e_i,D(e_j)]
$$
for all $1\leq i< j \leq n.$

Case 1. $i+j=3.$ Then $i=1, j=2$ and in this case we can check as
above.

Case 2. $i+j\geq 4.$ Then $j\geq 3,$ and therefore  $[e_i, e_j]\in
\mathcal{L}^{j+1}\subseteq\textrm{span}\{e_4, \cdots, e_n\},$
which implies that $D([e_i, e_j])=0.$

Further $[e_i, D(e_j)]=0,$ because $D(e_j)=e_n\in Z(\mathcal{L})$
or $D(e_j)=0.$

Now let us to show that $[D(e_i), e_j]$ is also zero. We have
\begin{eqnarray*}
[D(e_1), e_j] & = & [0, e_j]=0.
\end{eqnarray*}
Using \eqref{nminus} we obtain that
\begin{eqnarray*}
[D(e_2), e_j] & = & [e_{n-1}, e_j]=0.
\end{eqnarray*}
Finally
\begin{eqnarray*}
[D(e_i), e_j]=0
\end{eqnarray*}
for $i\geq 3,$ because in this case $D(e_i)=e_n$ or $D(e_i)=0.$
So,
\begin{eqnarray*}
D([e_i,  e_j])=0=[D(e_i), e_j]+[e_i, D(e_j)].
\end{eqnarray*}
The proof is complete.
\end{proof}

\begin{remark} It is easy to see that Lemma~\ref{filider} is also
true for the three-dimensional Heisenberg algebra.
\end{remark}

Let $\mathcal{L}$ be a $n$-dimensional filiform Lie algebra with
$n\geq 3.$ Consider  the linear operator $\Delta$ defined by
\eqref{filiformder} with $\alpha=2, \beta=1.$

\begin{lemma}\label{locfili}
The  linear operator $\Delta$ is a local derivation which is not a
derivation.
\end{lemma}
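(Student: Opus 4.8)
The plan is to establish the two assertions separately. That $\Delta$ is not a derivation is immediate from Lemma~\ref{filider}: since $\Delta$ is the operator \eqref{filiformder} with $\alpha=2\neq 1=\beta$, it violates the Leibniz rule (and the Remark after Lemma~\ref{filider} covers the three-dimensional Heisenberg case $n=3$ as well). So the substance is to show that $\Delta$ is a local derivation, i.e. that for every $x=\sum_{k=1}^{n}x_k e_k$ there is a derivation $D_x$ with $D_x(x)=\Delta(x)=2x_2e_{n-1}+x_3e_n$.

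The key auxiliary object I would introduce is one further derivation, not of the form \eqref{filiformder}, namely the linear map $G$ given by $G(e_2)=e_n$ and $G(e_j)=0$ for $j\neq 2$, so that $G\left(\sum_k x_k e_k\right)=x_2 e_n$. To see $G$ is a derivation, note that every bracket lies in $[\mathcal{L},\mathcal{L}]=\mathcal{L}^1=\textrm{span}\{e_3,\dots,e_n\}$, hence has zero $e_2$-component; thus $G([e_i,e_j])=0$ for all $i,j$. On the other side $Ge_i$ is a scalar multiple of the central element $e_n$ (or zero), so $[Ge_i,e_j]+[e_i,Ge_j]=0$ as well. Hence the Leibniz rule holds and $G\in\textrm{Der}\,\mathcal{L}$ (the same verification applies in the Heisenberg case).

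Then I would split on $x_2$. If $x_2=0$, then $\Delta(x)=x_3e_n$, and the derivation $D_{1,1}$ given by \eqref{filiformder} with $\alpha=\beta=1$ satisfies $D_{1,1}(x)=x_2e_{n-1}+x_3e_n=x_3e_n=\Delta(x)$, so $D_x=D_{1,1}$ works. If $x_2\neq 0$, let $D_{2,2}$ be the derivation \eqref{filiformder} with $\alpha=\beta=2$ and set $D_x=D_{2,2}-\tfrac{x_3}{x_2}\,G$, which is again a derivation, being a linear combination of derivations. Then $D_x(x)=\big(2x_2e_{n-1}+2x_3e_n\big)-\tfrac{x_3}{x_2}\,(x_2e_n)=2x_2e_{n-1}+x_3e_n=\Delta(x)$, as required. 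Since $D_{1,1}$ and $D_{2,2}$ are derivations by Lemma~\ref{filider} (and the Remark for $n=3$), this exhibits the desired $D_x$ in every case.

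The only real obstacle is producing the $e_n$-component with the correct coefficient: the one-parameter family \eqref{filiformder} alone cannot match $2x_2e_{n-1}+x_3e_n$ once $x_2$ and $x_3$ are both nonzero, which is exactly why $\Delta$ fails to be a global derivation. Introducing $G$ supplies an independent derivation hitting $e_n$, and dividing by $x_2$—legitimate precisely when $x_2\neq 0$—lets the two formulas fit together; hence the case split on whether $x_2$ vanishes is the crux of the argument.
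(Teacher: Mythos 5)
Your proof is correct and takes essentially the same route as the paper: your auxiliary derivation $G$ is exactly the paper's $D_2$ (verified by the same argument that it kills $[\mathcal{L},\mathcal{L}]$ and maps into the center), and since $D_{2,2}=2D_{1,1}$, your choice $D_x=D_{2,2}-\tfrac{x_3}{x_2}G$ in the case $x_2\neq 0$ coincides with the paper's $2D_1+tD_2$ with $t=-\tfrac{x_3}{x_2}$. The case split on whether $x_2$ vanishes and the non-derivation part via Lemma~\ref{filider} also match the published proof.
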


\begin{proof} By Lemma~\ref{filider}, $\Delta$ is not a
derivation.

Let us show that $\Delta$ is a local derivation. Denote by $D_1$
the derivation defined by \eqref{filiformder} with
$\alpha=\beta=1.$ Let $D_2$ be a linear operator on $\mathcal{L}$
defined by
\begin{equation*}
D_2\left(\sum\limits_{k=1}^n x_ke_k\right)=x_2 e_n.
\end{equation*}
Since $D_2|_{\mathcal{[\mathcal{L},\mathcal{L}]}}\equiv 0$ and
$D_2(\mathcal{L})\subseteq Z(\mathcal{L}),$ it follows that
$$
D_2([x,y])=0=[D_2(x),y]+[x,D_2(y)]
$$
for all $x, y\in \mathcal{L}.$ So, $D_2$ is a derivation.

Finally, for any $x=\sum\limits_{k=1}^n x_ke_k$ we find a
derivation $D$ such that $\Delta(x)=D(x).$

Case 1. $x_2=0.$ Then
\begin{eqnarray*}
\Delta(x) & = &  x_3 e_n =  D_1(x).
\end{eqnarray*}

Case 2. $x_2\neq 0.$ Set
$$
D=2D_1+tD_2,
$$
where $t=-\frac{\textstyle x_3}{\textstyle x_2}.$ Then
\begin{eqnarray*}
D(x)  & = &  2D_1(x)+t D_2(x) =  2(x_2e_{n-1}+x_3e_n)+tx_2e_n =
\\
& = & 2 x_2e_{n-1} + (2x_3+tx_2)e_n=2 x_2e_{n-1} +
x_3e_n=\Delta(x).
\end{eqnarray*}
The proof is complete.
\end{proof}

\end{document}